\newtheorem{thm}[equation]{Theorem}
\numberwithin{equation}{section}
\newtheorem{cor}[equation]{Corollary}
\newtheorem{expl}[equation]{Example}
\newtheorem{rmk}[equation]{Remark}
\newtheorem{lem}[equation]{Lemma}
\newtheorem{prop}[equation]{Proposition}
\newtheorem{tab}[equation]{Table}
\begin{document}
\raggedbottom \voffset=-.7truein \hoffset=0truein \vsize=8truein
\hsize=6truein \textheight=8truein \textwidth=6truein
\baselineskip=18truept

\def\mapright#1{\ \smash{\mathop{\longrightarrow}\limits^{#1}}\ }
\def\mapleft#1{\smash{\mathop{\longleftarrow}\limits^{#1}}}
\def\mapup#1{\Big\uparrow\rlap{$\vcenter {\hbox {$#1$}}$}}
\def\mapdown#1{\Big\downarrow\rlap{$\vcenter {\hbox {$\ssize{#1}$}}$}}
\def\mapne#1{\nearrow\rlap{$\vcenter {\hbox {$#1$}}$}}
\def\mapse#1{\searrow\rlap{$\vcenter {\hbox {$\ssize{#1}$}}$}}
\def\mapr#1{\smash{\mathop{\rightarrow}\limits^{#1}}}
\def\ss{\smallskip}
\def\vp{v_1^{-1}\pi}
\def\at{{\widetilde\alpha}}
\def\sm{\wedge}
\def\la{\langle}
\def\ra{\rangle}
\def\on{\operatorname}
\def\squarecup{\sqcup}
\def\spin{\on{Spin}}
\def\kbar{{\overline k}}
\def\qed{\quad\rule{8pt}{8pt}\bigskip}
\def\ssize{\scriptstyle}
\def\a{\alpha}
\def\bz{{\Bbb Z}}
\def\im{\on{im}}
\def\ct{\widetilde{C}}
\def\ext{\on{Ext}}
\def\sq{\on{Sq}}
\def\eps{\epsilon}
\def\ar#1{\stackrel {#1}{\rightarrow}}
\def\br{{\bold R}}
\def\bC{{\bold C}}
\def\bA{{\bold A}}
\def\bB{{\bold B}}
\def\bD{{\bold D}}
\def\bh{{\bold H}}
\def\bQ{{\bold Q}}
\def\bP{{\bold P}}
\def\bx{{\bold x}}
\def\bo{{\bold{bo}}}
\def\si{\sigma}
\def\ol#1{\overline{#1}}
\def\Ebar{{\overline E}}
\def\dbar{{\overline d}}
\def\Sum{\sum}
\def\tfrac{\textstyle\frac}
\def\tb{\textstyle\binom}
\def\Si{\Sigma}
\def\w{\wedge}
\def\equ{\begin{equation}}
\def\b{\beta}
\def\G{\Gamma}
\def\g{\gamma}
\def\k{\kappa}
\def\psit{\widetilde{\Psi}}
\def\tht{\widetilde{\Theta}}
\def\psiu{{\underline{\Psi}}}
\def\thu{{\underline{\Theta}}}
\def\aee{A_{\text{ee}}}
\def\aeo{A_{\text{eo}}}
\def\aoo{A_{\text{oo}}}
\def\aoe{A_{\text{oe}}}
\def\fbar{{\overline f}}
\def\endeq{\end{equation}}
\def\sn{S^{2n+1}}
\def\zp{\bold Z_p}
\def\A{{\cal A}}
\def\P{{\mathcal P}}
\def\cj{{\cal J}}
\def\zt{{\bold Z}_2}
\def\bs{{\bold s}}
\def\bof{{\bold f}}
\def\bq{{\bold Q}}
\def\be{{\bold e}}
\def\Tor{\on{Tor}}
\def\ker{\on{ker}}
\def\coker{\on{coker}}
\def\da{\downarrow}
\def\colim{\operatornamewithlimits{colim}}
\def\zphat{\bz_2^\wedge}
\def\io{\iota}
\def\Om{\Omega}
\def\Prod{\prod}
\def\e{{\cal E}}
\def\exp{\on{exp}}
\def\abar{{\overline a}}
\def\xbar{{\overline x}}
\def\ybar{{\overline y}}
\def\zbar{{\overline z}}
\def\Vbar{{\overline V}}
\def\nbar{{\overline n}}
\def\bbar{{\overline b}}
\def\et{{\widetilde E}}
\def\ni{\noindent}
\def\coef{\on{coef}}
\def\zcl{\on{zcl}}
\def\den{\on{den}}
\def\lcm{\on{l.c.m.}}
\def\vi{v_1^{-1}}
\def\ot{\otimes}
\def\psibar{{\overline\psi}}
\def\mhat{{\hat m}}
\def\exc{\on{exc}}
\def\ms{\medskip}
\def\ehat{{\hat e}}
\def\etao{{\eta_{\text{od}}}}
\def\etae{{\eta_{\text{ev}}}}
\def\dirlim{\operatornamewithlimits{dirlim}}
\def\gt{\widetilde{L}}
\def\lt{\widetilde{\lambda}}
\def\st{\widetilde{s}}
\def\ft{\widetilde{f}}
\def\sgd{\on{sgd}}
\def\lfl{\lfloor}
\def\rfl{\rfloor}
\def\ord{\on{ord}}
\def\gd{{\on{gd}}}
\def\rk{{{\on{rk}}_2}}
\def\nbar{{\overline{n}}}
\def\lg{{\on{lg}}}
\def\cR{\mathcal{R}}
\def\cT{\mathcal{T}}
\def\N{{\Bbb N}}
\def\Z{{\Bbb Z}}
\def\Q{{\Bbb Q}}
\def\R{{\Bbb R}}
\def\C{{\Bbb C}}
\def\l{\left}
\def\r{\right}
\def\mo{\on{mod}}
\def\vexp{v_1^{-1}\exp}
\def\notimm{\not\subseteq}
\def\Remark{\noindent{\it  Remark}}

\def\*#1{\mathbf{#1}}
\def\0{$\*0$}
\def\1{$\*1$}
\def\22{$(\*2,\*2)$}
\def\33{$(\*3,\*3)$}
\def\ss{\smallskip}
\def\ssum{\sum\limits}
\def\dsum{\displaystyle\sum}
\def\la{\langle}
\def\ra{\rangle}
\def\on{\operatorname}
\def\o{\on{o}}
\def\U{\on{U}}
\def\lg{\on{lg}}
\def\a{\alpha}
\def\bz{{\Bbb Z}}
\def\eps{\varepsilon}
\def\br{{\bold R}}
\def\bc{{\bold C}}
\def\bN{{\bold N}}
\def\nut{\widetilde{\nu}}
\def\tfrac{\textstyle\frac}
\def\b{\beta}
\def\G{\Gamma}
\def\g{\gamma}
\def\zt{{\bold Z}_2}
\def\zth{{\bold Z}_2^\wedge}
\def\bs{{\bold s}}
\def\bx{{\bold x}}
\def\bof{{\bold f}}
\def\bq{{\bold Q}}
\def\be{{\bold e}}
\def\lline{\rule{.6in}{.6pt}}
\def\xb{{\overline x}}
\def\xbar{{\overline x}}
\def\Wbar{{\overline W}}
\def\ybar{{\overline y}}
\def\zbar{{\overline z}}
\def\ebar{{\overline \be}}
\def\nbar{{\overline n}}
\def\rbar{{\overline r}}
\def\Mbar{{\overline M}}
\def\et{{\widetilde e}}
\def\ni{\noindent}
\def\ms{\medskip}
\def\ehat{{\hat e}}
\def\xhat{{\widehat x}}
\def\lbar{\ell}
\def\minp{\min\nolimits'}
\def\N{{\Bbb N}}
\def\Z{{\Bbb Z}}
\def\Q{{\Bbb Q}}
\def\R{{\Bbb R}}
\def\C{{\Bbb C}}
\def\el{\ell}
\def\bl{\ell}
\def\TC{\on{TC}}
\def\dstyle{\displaystyle}
\def\ds{\dstyle}
\def\Remark{\noindent{\it  Remark}}
\title[On the zero-divisor-cup-length]
{On the zero-divisor-cup-length of spaces of oriented isometry classes of planar polygons}
\author{Donald M. Davis}
\address{Department of Mathematics, Lehigh University\\Bethlehem, PA 18015, USA}
\email{dmd1@lehigh.edu}
\date{March 17, 2016}

\keywords{Topological complexity,  planar polygon spaces, zero-divisor-cup-length}
\thanks {2000 {\it Mathematics Subject Classification}: 57R19, 55R80, 58D29
.}

\maketitle
\begin{abstract} Using information about the rational cohomology ring of the space $M(\ell_1,\ldots,\ell_n)$ of oriented isometry classes of planar $n$-gons with the specified side lengths, we obtain bounds for the zero-divisor-cup-length ($\zcl$) of these spaces, which provide lower bounds for their topological complexity (TC). In many cases our result about the cohomology ring is complete and  we determine the precise $\zcl$. We find that there will usually be a significant gap between the bounds for $\TC$ implied by zcl and dimensional considerations.
 \end{abstract}

\section{Introduction}\label{intro}
The topological complexity, $\TC(X)$, of a topological space $X$ is, roughly, the number of rules required to specify how to move between any two points of $X$. A ``rule'' must be such that the choice of path varies continuously with the choice of endpoints. (See \cite[\S4]{F}.) Information about the cohomology ring of $X$ can be used to give a lower bound for $\TC(X)$.

Let $\bl=(\ell_1,\ldots,\ell_n)$ be an $n$-tuple of positive real numbers. Let $M(\bl)$ denote the space of oriented $n$-gons in the plane with successive side lengths $\ell_1,\ldots,\ell_n$, where polygons are identified under translation and rotation. Thus
$$M(\bl)=\{(z_1,\ldots,z_n)\in (S^1)^n:\sum \ell_iz_i=0\}/SO(2).$$
If we think of the sides of the polygon as linked arms of a robot, then $\TC(M(\bl))$ is the number of rules required to program the robot to move between any two configurations.

Let $[n]=\{1,\ldots,n\}$ throughout. We say that $\bl$ is {\it generic} if there is no subset $S\subset[n]$ for which $\ds\sum_{i\in S}\ell_i=\ds\sum_{i\not\in S}\ell_i$.
For such $\bl$,
$M(\bl)$ is an orientable $(n-3)$-manifold (\cite[Thm 1.3]{F}) and hence, by \cite[Cor 4.15]{F}, satisfies \begin{equation}\label{TCM}\TC(M(\bl))\le 2n-5.\end{equation}
A lower bound for topological complexity is obtained using
 the  zero-divisor-cup-length of $X$, $\zcl(X)$, which is the maximum number of elements $\a_i\in H^*(X\times X)$ satisfying $m(\a_i)=0$ and $\ds\prod_i\a_i\ne0$. Here $m:H^*(X)\otimes H^*(X)\to H^*(X)$ denotes the cup product pairing with rational coefficients, and $\a_i$ is called a zero divisor. Throughout the paper, all cohomology groups have coefficients in $\Q$, unless specified to the contrary. In \cite[Thm 7]{F2}, it was shown that \begin{equation}\label{zclz}\TC(X)\ge\zcl(X)+1.\end{equation}

In this paper, we obtain some new information about the rational cohomology ring $H^*(M(\bl))$ when $\bl$ is generic to obtain lower bounds for $\zcl(M(\bl))$ and hence for $\TC(M(\bl))$. Frequently, our description of the cohomology ring is complete (64 out of 134 cases when $n=7$), and we can give the best lower bound implied by ordinary cohomological methods. However, unlike the situation for isometry classes of polygons, i.e., when polygons are also identified under reflection, this lower bound is usually significantly less than $2n-5$.

Indeed, for the space of isometry classes of planar polygons,
$$\Mbar(\bl)=\{(z_1,\ldots,z_n)\in (S^1)^n:\sum \ell_iz_i=0\}/O(2),$$
the mod-2 cohomology ring was completely determined in \cite{HK}, and in \cite{D1} and \cite{D2} we showed that for several large families of $\bl$,
$$2n-6\le\TC(\Mbar(\bl))\le 2n-5,$$
the latter because $\Mbar(\bl))$ is also an $(n-3)$-manifold when $\bl$ is generic.
Note that for motions in the plane, $M(\bl)$ would seem to be a more relevant space than $\Mbar(\bl)$.
For the spaces $M(\bl)$ considered here, rational cohomology often gives slightly stronger bounds than does mod-2 cohomology.

 In Section \ref{EMSSsec}, we describe what we can say about the rational cohomology ring $H^*(M(\bl))$.
 In Section \ref{zclsec}, we  obtain information about $\zcl(M(\bl))$ and hence $\TC(M(\bl))$. Theorems \ref{upper} and \ref{lower} give upper and lower bounds for $\zcl(M(\bl))$.  See Table \ref{T1} for a tabulation when $n=8$.
In Section \ref{sec4}, we give an example, due to the referee, in which there are what we call ``exotic products'' in the cohomology ring. The possibility of these prevents us from making stronger zcl estimates.

We thank the referee for pointing out a mistake in an earlier version, and for pointing out a number of illustrative examples. We also thank Nitu Kitchloo for some early suggestions.

\section{The rational cohomology ring $H^*(M(\bl))$}\label{EMSSsec}
We assume throughout that $\ell_1\le\cdots\le\ell_n$.
It is well-understood (\cite[Prop 2.2]{HK}) that the homeomorphism type of $M(\bl)$  is determined by which subsets $S$ of $[n]$ are {\it short}, which means that
$\dsum_{i\in S}\ell_i<\tfrac12\dsum_{i=1}^n\ell_i$. For generic $\lbar$, a subset which is not short is called {\it long}.

Define a partial order on the power set of $[n]$  by $S\le T$ if  $S=\{s_1,\ldots,s_\ell\}$ and $T\supset\{t_1,\ldots,t_\ell\}$ with $s_i\le t_i$ for all $i$.
This order will be used throughout the paper, applied also to multisets.
As introduced in \cite{H}, the {\it genetic code} of $\lbar$  is the  set of maximal elements (called {\it genes}) in the set of short subsets of $[n]$ which contain $n$.
The homeomorphism type of $M(\bl)$  is determined by the genetic code of $\lbar$. Note that if $\lbar=(\ell_1,\ldots,\ell_n)$, then all genes have largest element $n$. We introduce the new terminology that if $\{n,i_r,\ldots,i_1\}$ is a gene, then $\{i_r,\ldots,i_1\}$ is called a {\it gee}. (Gene without the $n$.)
 We define a {\it subgee} to be a set of positive integers which is $\le$ a gee under the above ordering.

 The following result was proved in \cite[Thm 6]{FHS}.
 \begin{thm}\label{FHSthm} The rational cohomology ring $H^*(M(\bl))$ contains a subalgebra generated by classes $V_1,\ldots,V_{n-1}\in H^1(M(\bl))$
 whose only relations are that if $S=\{s_1,\ldots,s_k\}$ with $s_1<\cdots<s_k$, then $V_S:=V_{s_1}\cdots V_{s_k}$ satisfies $V_S=0$ iff $S$ is not a subgee of $\bl$.\end{thm}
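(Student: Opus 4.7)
My plan is to prove Theorem~\ref{FHSthm} by realizing $M(\bl)$ as an explicit codimension-$2$ submanifold of a torus, identifying the classes $V_i$ as pullbacks, and combining a geometric vanishing argument with a Poincar\'e-duality nonvanishing argument. For generic $\bl$ the $SO(2)$-action on $\{z\in (S^1)^n:\sum\ell_iz_i=0\}$ is free, and the slice $z_n=1$ identifies $M(\bl)$ with the smooth codimension-$2$ submanifold of $T^{n-1}$ cut out by the pair of real equations
\[\on{Re}\bigl(\ell_n+\sum_{i<n}\ell_iz_i\bigr)=0,\qquad \on{Im}\bigl(\ell_n+\sum_{i<n}\ell_iz_i\bigr)=0.\]
Let $\iota:M(\bl)\hookrightarrow T^{n-1}$ denote the inclusion and set $V_i=\iota^*e_i$, where $e_i\in H^1(T^{n-1})$ is the $i$th standard generator. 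The subalgebra of $H^*(M(\bl))$ generated by the $V_i$ is then exactly the image of $\iota^*:\Lambda_\Q(e_1,\ldots,e_{n-1})\to H^*(M(\bl))$, so the problem reduces to computing the kernel and image of this map.

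For the vanishing half of the theorem, suppose $S\subseteq[n-1]$ is not a subgee. I would use the Morse-theoretic presentation of $M(\bl)$ coming from the bending flows of \cite{H}, which describe $M(\bl)$ as a union of cells indexed by the short subsets containing $n$, equivalently by the subgees together with their order-ideal structure. A direct calculation with the corresponding chain complex should then show that the cocycle representing $V_S$ restricts trivially to every critical cell when $S$ is not dominated by any gee, forcing $V_S=0$ in $H^*(M(\bl))$.

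For the nonvanishing half, for each subgee $S$ I would construct an explicit compact oriented submanifold $N_S\subset M(\bl)$ by freezing the ``long'' complementary arm of the polygon and letting the arm indexed by $\{n\}\cup S$ flex within the constraints imposed by the gee dominating $S$. An intersection calculation against the generating monomials should give a Kronecker-type pairing $\la V_{S'},[N_S]\ra=\pm\delta_{S,S'}$, up to nonzero scalars, which forces linear independence of the subgee monomials; matching the count of subgees against the known rational Betti numbers of $M(\bl)$ then shows that the relations $V_S=0$ for non-subgees generate the full kernel of $\iota^*$. The principal difficulty I anticipate is this Poincar\'e-duality step: the cycles $N_S$ must be built coherently across all subgees and their intersection numbers verified without double-counting, which demands genuine combinatorial input from the genetic code of $\bl$.
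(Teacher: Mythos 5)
The paper does not prove this result: it is stated as a citation of Farber--Hausmann--Sch\"utz \cite[Thm 6]{FHS}, so there is no in-paper proof to compare against. I will instead assess your sketch on its own terms.

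Your skeleton (the slice $z_n=1$ realizing $M(\bl)$ as a codimension-two submanifold of $T^{n-1}$, the pullbacks $V_i=\iota^*e_i$, and the two-pronged vanishing/nonvanishing strategy) is a reasonable starting point, but both central steps are currently placeholders with genuine gaps. For the vanishing half, ``the cocycle representing $V_S$ restricts trivially to every critical cell'' is not a well-defined operation: $V_S$ is a cohomology class, and a Morse-theoretic CW model of $M(\bl)$ hands you cells and boundary maps, not a canonical cochain representative of a pulled-back torus class to restrict. You would need either to compute the cellular (co)boundary maps explicitly---which is precisely the nontrivial content of the Hausmann--Knutson/FHS computation, carried out via equivariant cohomology of an abelianized polygon space---or to exhibit a geometric Poincar\'e dual of $V_S$ and show it is null-homologous when $S$ is not a subgee. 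For the nonvanishing half, the cycles $N_S$ are not actually constructed: ``freezing the long complementary arm'' has to be turned into a compact embedded oriented submanifold of the correct dimension whose intersection numbers with duals of the $V_{S'}$ realize the claimed Kronecker pairing, and you correctly identify this as the principal difficulty without resolving it. Finally, the Betti-number comparison you append is not needed for the theorem as stated, which only asserts a subalgebra with the given relations rather than that the $V_i$ generate $H^*(M(\bl))$; but that comparison, via \cite[Thm 1.7]{F}, is exactly what the paper does use later in its proof of Theorem \ref{basis}, so the instinct is well placed even if it is misattached here.
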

 \ni In other words, the nonzero monomials in the $V_i$'s correspond exactly to the subgees. Of course, $V_i^2=0$, since $\dim(V_i)$ is odd.

It is well-known (e.g. \cite[Expl 2.3]{Geom}) that if the genetic code of $\bl$ is $\la\{n,n-3,n-4,\ldots,1\}\ra$, then $M(\bl)$ is homeomorphic to $(S^1)^{n-3}\sqcup (S^1)^{n-3}$. We will exclude this case from our analysis and use the following known  result, in which, as always, $\bl=(\ell_1,\ldots,\ell_n)$.
\begin{prop}\label{notmax} $($\cite[Rmk 2.8]{Geom}$)$ If the genetic code of $\bl$ does not equal $\la\{n,n-3,\ldots,1\}\ra$, then all genes have cardinality less than $n-2$, and $M(\bl)$ is a connected $(n-3)$-manifold.\end{prop}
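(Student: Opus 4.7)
The plan is to handle the two conclusions of the proposition separately: first the combinatorial bound on gene cardinalities by direct analysis of the short/long dichotomy, then the topological statement by invoking existing results.

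First I would argue by contrapositive, assuming some gene $G$ has cardinality at least $n-2$ and forcing the genetic code. Since $M(\bl)$ is nonempty, the polygon inequality gives $\ell_n<\tfrac12\sum_i\ell_i$. The cases $|G|=n$ (where $[n]$ is not short) and $|G|=n-1$ (where the complement $\{j\}$ in $[n]$ would require $\ell_j>\tfrac12\sum\ell_i\geq\ell_n\geq\ell_j$) are thus immediately excluded. So $|G|=n-2$ and $G=[n]\setminus\{j,k\}$ with $j<k<n$, and shortness is equivalent to $\ell_j+\ell_k>\sum_{i\notin\{j,k\}}\ell_i$. The crucial observation is that if $k\leq n-2$ then $\{n-1,n\}\subseteq[n]\setminus\{j,k\}$, so $\sum_{i\notin\{j,k\}}\ell_i\geq\ell_{n-1}+\ell_n\geq 2\ell_{n-2}\geq\ell_j+\ell_k$, a contradiction; hence $k=n-1$. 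An entirely analogous argument, using $\{n-2,n\}\subseteq[n]\setminus\{j,n-1\}$ when $j\leq n-3$, forces $j=n-2$. Thus $G=\{1,2,\ldots,n-3,n\}$, and shortness of $G$ reduces to $\ell_{n-2}+\ell_{n-1}>\tfrac12\sum_i\ell_i$.

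Having pinned $G$ down, I would show it is the unique gene. Any short $T\ni n$ that contains $n-1$ would have $\sum_{i\in T}\ell_i\geq\ell_{n-1}+\ell_n\geq\ell_{n-2}+\ell_{n-1}>\tfrac12\sum\ell_i$, contradicting shortness; the symmetric argument rules out $n-2\in T$. Hence $T\subseteq G$, which immediately gives $T\leq G$ in the partial order, so $G$ is the unique maximal short subset containing $n$, making the genetic code equal to $\la\{n,n-3,\ldots,1\}\ra$. Taking the contrapositive yields the first half of the proposition.

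For the topological half, the $(n-3)$-manifold structure for generic $\bl$ is already cited as \cite[Thm 1.3]{F} in the introduction. Connectedness then follows from the classification of $M(\bl)$ in \cite{HK}, which identifies the excluded genetic code $\la\{n,n-3,\ldots,1\}\ra$ as the unique case where $M(\bl)$ is disconnected (yielding $(S^1)^{n-3}\sqcup(S^1)^{n-3}$, as noted just before the proposition); under the present hypothesis that case is excluded, so $M(\bl)$ is connected. The main obstacle to a fully self-contained argument is this connectedness assertion, which depends on fine-grained topology of polygon spaces rather than the pair-sum combinatorics that cleanly handles the combinatorial half.
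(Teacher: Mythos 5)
Your proposal is essentially correct, but it takes a different route from the paper, which offers no argument at all: Proposition \ref{notmax} is simply quoted from Hausmann (\cite[Rmk 2.8]{Geom}), so the paper's ``proof'' is a citation. Your combinatorial half is a genuine, self-contained argument and it checks out: assuming a gene $G$ with $|G|\ge n-2$, the cases $|G|=n,n-1$ are impossible since such sets cannot be short (note you do not even need to invoke nonemptiness separately --- shortness of $G\ni n$ already gives $\ell_n<\tfrac12\sum\ell_i$), and the forcing $k=n-1$, then $j=n-2$, via $\ell_j+\ell_k\le\ell_{n-1}+\ell_n$ against $\ell_j+\ell_k>\sum_{i\notin\{j,k\}}\ell_i$ is exactly right; so is the uniqueness step showing every short $T\ni n$ avoids $n-1$ and $n-2$, hence $T\subseteq G$ and a fortiori $T\le G$ in the paper's partial order. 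This buys something the paper does not provide: an elementary verification of the cardinality bound, plus the useful byproduct that the excluded genetic code $\la\{n,n-3,\ldots,1\}\ra$ occurs precisely when $\ell_{n-2}+\ell_{n-1}>\tfrac12\sum\ell_i$. For the topological half you do what the paper does, namely defer to the literature (the manifold statement to \cite[Thm 1.3]{F}, connectedness to a classification result); the only real quibble is the attribution: the connectivity criterion (connected iff $\{n-2,n-1\}$ is short, equivalently iff the genetic code is not $\la\{n,n-3,\ldots,1\}\ra$) is not naturally located in \cite{HK}, but rather in \cite{Geom} (the very Remark 2.8 cited in the statement, together with Expl 2.3) or, originally, in Kapovich--Millson; citing one of those would close the loop cleanly, and your own inequality $\ell_{n-2}+\ell_{n-1}>\tfrac12\sum\ell_i$ is exactly the dividing line that criterion uses.
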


From now on, let $m=n-3$ denote the dimension of  $M(\bl)$, and let $W_\emptyset$ denote the orientation class of $H^m(M(\bl))$.
We obtain
\begin{thm} A basis for $H^*(M(\lbar))$ consists of the classes $V_S$ of Theorem \ref{FHSthm} such that $S$ is a subgee of $\bl$,  together with classes $W_S\in H^{m-|S|}(M(\bl))$, for exactly the same $S$'s, satisfying that  $$ V_SW_{S'}=\delta_{S,S'}W_\emptyset\quad\text{if } |S'|=|S|.$$
Also $V_SV_{S'}=V_{S\cup S'}$ if $S$ and $S'$ are disjoint and $S\cup S'$ is a subgee of $\bl$, while $V_SV_{S'}=0$ otherwise. Finally,
$W_SW_{S'}=0$ whenever $|W_S|+|W_{S'}|=m$.\label{basis}\end{thm}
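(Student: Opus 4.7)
The plan is to combine Theorem~\ref{FHSthm} with rational Poincar\'e duality, which applies because Proposition~\ref{notmax} guarantees $M(\bl)$ is a closed orientable manifold of dimension $m=n-3$. Theorem~\ref{FHSthm} already supplies the linearly independent classes $V_S\in H^{|S|}(M(\bl))$ indexed by subgees $S$, together with the exact multiplicative relations $V_SV_{S'}=V_{S\cup S'}$ or $0$ asserted in the present theorem, so two of the four claims are immediate.

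The essential additional input is a dimension count. I would invoke the Farber--Hausmann--Sch\"utz Betti number computation for $M(\bl)$ to conclude that $\dim H^k(M(\bl))=s_k+s_{m-k}$, where $s_i$ denotes the number of subgees of size $i$; in particular $\dim_\Q H^*(M(\bl)) = 2N$ where $N$ is the total number of subgees. Thus in each degree $k$ the classes $V_T$ with $|T|=k$ account for exactly $s_k$ of these dimensions, leaving room for $s_{m-k}$ additional basis elements.

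Non-degeneracy of the cup product pairing $H^{|S|}\otimes H^{m-|S|}\to H^m=\Q\, W_\emptyset$ together with the linear independence of $\{V_T:|T|=|S|\}$ then lets me solve for classes $W_S\in H^{m-|S|}$ satisfying $V_TW_S = \delta_{T,S}W_\emptyset$; by the dimension count the resulting collection $\{V_S\}\cup\{W_S\}$ is a basis, and the relation $V_SW_{S'}=\delta_{S,S'}W_\emptyset$ for $|S'|=|S|$ holds by construction. Each $W_S$ is determined only modulo the annihilator of $\on{span}\{V_T:|T|=|S|\}$ in $H^{m-|S|}$, a subspace of dimension exactly $s_{m-|S|}$, and this ambiguity is precisely what I need for the remaining claim.

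The hardest step is the final relation $W_SW_{S'}=0$ whenever $|W_S|+|W_{S'}|=m$. Each such product lives in the one-dimensional $H^m$, so this amounts to killing a finite collection of scalars. I would exploit the $s_{m-|S|}$-dimensional freedom described above: any adjustment $W_S\mapsto W_S+\sum_T c_{S,T}W_T$ preserves every already-established pairing $V_{T'}W_S$ while altering each $W_SW_{S'}$ affinely, and I would solve the resulting linear system by induction on $|S|$ (ascending from $W_\emptyset$), the matching Poincar\'e-dual dimensions supplying enough room to guarantee solvability. Success in this step exhibits $H^*(M(\bl))$ as a decomposition $V^*\oplus W^*$ in which $W^*$ is isotropic for the intersection form in complementary degrees.
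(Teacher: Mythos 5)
Your high-level strategy matches the paper's: use Theorem~\ref{FHSthm} for the $V_S$'s and their products, invoke the Farber Betti-number computation (\cite[Thm 1.7]{F}) to get $\dim H^k = s_k + s_{m-k}$, and then exploit nonsingularity of the Poincar\'e pairing to manufacture the $W_S$'s. Where you diverge is the key step of arranging \emph{simultaneously} the two pairing conditions $V_TW_S=\delta_{T,S}W_\emptyset$ and $W_SW_{S'}=0$, and there your argument contains a genuine bug.

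You propose first choosing $W_S$ with the $V_TW_S$ pairing property and then adjusting by
``$W_S\mapsto W_S+\sum_T c_{S,T}W_T$,'' asserting this preserves the pairings $V_{T'}W_S$.
It does not: if the $W_T$ (for $|T|=|S|$, the only ones in the right degree) already satisfy $V_{T'}W_T=\delta_{T',T}W_\emptyset$, then the adjustment changes $V_{T'}W_S$ by $c_{S,T'}W_\emptyset$. So as written the step fails. You correctly identify the available freedom as the annihilator of $\on{span}\{V_T:|T|=|S|\}$ in $H^{m-|S|}$, but you never compute what that annihilator \emph{is}: by the Betti-number count and the vanishing of all degree-$m$ $V$-monomials (gees have size $\le m-1$ by Proposition~\ref{notmax}), it is exactly $\on{span}\{V_{S'}:|S'|=m-|S|\}$. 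Adjusting by $V_{S'}$'s, not $W_T$'s, the change in $W_SW_{S'}$ is $c_{S,S'}W_\emptyset$, and each coefficient is then read off trivially --- no appeal to ``matching Poincar\'e-dual dimensions'' or to an induction on $|S|$ is needed (though one must adjust only one member of each complementary pair, and separately handle the middle degree when $m$ is even using graded (anti)commutativity). So your route can be repaired, but as stated the solvability is asserted rather than shown, and the stated adjustment would destroy what it is supposed to preserve.

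The paper avoids the adjustment entirely. Its Lemma~\ref{lemma} is a bilinear-algebra statement that, given linearly independent families in $U$ and $U'$ pairing to zero against each other, produces \emph{dual bases} $\{u_i\}$, $\{u'_j\}$ with $\phi(u_i,u'_j)=\delta_{ij}$. Applying this with $U=H^i$, $U'=H^{m-i}$ and the $V$-monomials as the given families, the $W_S$'s come out already satisfying both $V_TW_S=\delta_{T,S}W_\emptyset$ \emph{and} $W_{S'}W_S=0$, because the latter is just the off-diagonal Kronecker delta among the constructed basis elements. That two-stage dual-basis construction is exactly the clean substitute for the post hoc correction you attempted.
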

\begin{proof} By \cite[Thm 1.7]{F}, for all $i$, the $i$th Betti number of $M(\bl)$ equals the number of $V_S$'s described in Theorem \ref{FHSthm} of degree $i$ plus the number of such $V_S$'s of degree $m-i$.
By Theorem \ref{FHSthm}, our classes $V_S$ are linearly independent in $H^*(M(\bl))$ and all products $V_SV_{S'}$ are zero except those listed in our set. By Lemma \ref{lemma}, the nonsingularity of the Poincar\'e duality pairing implies that there are classes $W_S$ which pair with the classes $V_S$ and with each other in the claimed manner, and the Betti number result implies that there are no additional classes.
\end{proof}

The following elementary lemma was used in the preceding proof.
This lemma is applied to $U=H^i(M(\bl))$, $U'=H^{m-i}(M(\bl))$, $\{u_1,\ldots,u_k\}$ the set of $V_S$'s in $H^i(M(\bl))$, and $\{u'_{k+1},\ldots,u'_t\}$ the set of $V_S$'s in  $H^{m-i}(M(\bl))$.

\begin{lem} \label{lemma}Suppose $U$ and $U'$ are $t$-dimensional vector spaces over $\Q$ and $\phi:U\times U'\to\Q$ is a nonsingular bilinear pairing. Suppose $\{u_1,\ldots,u_k\}\subset U$ is linearly independent, as is $\{u'_{k+1},\ldots,u'_t\}\subset U'$, and $\phi(u_i,u'_j)=0$ for $1\le i\le k<j\le t$. Then there exist bases $\{u_1,\ldots,u_t\}$ and $\{u'_1,\ldots,u'_t\}$ of $U$ and $U'$ extending the given linearly-independent sets and satisfying $\phi(u_i,u'_j)=\delta_{i,j}$.\end{lem}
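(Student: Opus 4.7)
The approach is via linear duality: since $\phi$ is nonsingular, it induces an isomorphism $\Phi\colon U\to(U')^*$ (and symmetrically $U'\to U^*$). Write $U_0=\on{span}(u_1,\ldots,u_k)$ and $U'_0=\on{span}(u'_{k+1},\ldots,u'_t)$; the hypothesis says $U_0\subseteq(U'_0)^\perp$, where $\perp$ denotes the $\phi$-annihilator. The first key step is to upgrade this to an equality. Since $\Phi$ is an isomorphism and the annihilator of $U'_0$ inside $(U')^*$ has codimension $\dim U'_0=t-k$, we get $\dim(U'_0)^\perp=k$; as this already contains the $k$-dimensional subspace $U_0$, equality follows. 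By the symmetric argument with $\Phi'\colon U'\to U^*$, we also obtain $U'_0=U_0^\perp$.

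Next I would construct $u'_1,\ldots,u'_k$. The composition $U'\xrightarrow{\Phi'} U^*\to U_0^*$ (with the second map being restriction) is surjective with kernel exactly $U_0^\perp=U'_0$, by the previous paragraph. Lifting the basis of $U_0^*$ dual to $u_1,\ldots,u_k$ produces $u'_1,\ldots,u'_k\in U'$ with $\phi(u_i,u'_j)=\delta_{i,j}$ for all $1\le i,j\le k$. To see that $\{u'_1,\ldots,u'_t\}$ is a basis of $U'$, suppose $\sum_{j=1}^ta_ju'_j=0$. Pairing with $u_i$ for $i\le k$ gives $a_i=0$, since $\phi(u_i,u'_j)=0$ for $j>k$ and $\phi(u_i,u'_j)=\delta_{i,j}$ for $j\le k$. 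The remaining relation reduces to a relation among the linearly independent $u'_{k+1},\ldots,u'_t$, forcing all remaining $a_j=0$.

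Finally, use $\Phi^{-1}$ applied to the basis of $(U')^*$ dual to $\{u'_1,\ldots,u'_t\}$ to obtain a basis $\{\tilde u_1,\ldots,\tilde u_t\}$ of $U$ with $\phi(\tilde u_i,u'_j)=\delta_{i,j}$. For $i\le k$, the vectors $u_i$ and $\tilde u_i$ pair identically against every element of the basis $\{u'_1,\ldots,u'_t\}$ (by the hypothesis for $j>k$ and by the construction in the preceding paragraph for $j\le k$), and so $\tilde u_i=u_i$ by the nonsingularity of $\phi$. Setting $u_{k+1}:=\tilde u_{k+1},\ldots,u_t:=\tilde u_t$ completes the extension.

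There is no genuine obstacle here — the lemma is pure linear algebra — but the one step that genuinely uses nonsingularity (rather than just bilinearity) is the dimension count showing $\dim(U'_0)^\perp=k$ in the first paragraph; without it one would only have the inclusion $U_0\subseteq(U'_0)^\perp$ and could not guarantee that the residual dual basis positions $u'_1,\ldots,u'_k$ actually exist.
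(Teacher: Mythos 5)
Your proof is correct and follows essentially the same strategy as the paper's: use nonsingularity to realize the dual functional of $u_i$ on $U_0=\on{span}(u_1,\ldots,u_k)$ as pairing against some $u'_i\in U'$, verify that $\{u'_1,\ldots,u'_t\}$ is then a basis via the two-step cancellation argument, and invoke nonsingularity again to produce the remaining $u_i$'s as dual basis vectors. Your opening dimension count showing $U_0=(U'_0)^\perp$ is a pleasant reformulation but is not actually needed for the construction (any lift of the dual basis of $U_0^*$ along the surjection $U'\to U_0^*$ works, kernel identification aside), and your closing remark that nonsingularity enters only there is not quite right -- it is also what makes $\Phi$ and $\Phi'$ isomorphisms, which you invoke to build both the $u'_i$ for $i\le k$ and the $\tilde u_i$; but these are side comments, and the core argument matches the paper's.
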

\begin{proof} For $1\le i\le k$, let $\psi_i:U\to\Q$ be any homomorphism for which $\psi_i(u_j)=\delta_{i,j}$ for $1\le j\le k$. By nonsingularity, there is $u'_i\in U'$ such that $\phi(u,u'_i)=\psi_i(u)$ for all $u\in U$. To see that $\{u'_1,\ldots,u'_t\}$ is linearly independent, assume $\sum c_\ell u'_\ell=0$. Applying $\phi(u_i,-)$ implies that $c_i=0$, $1\le i\le k$,
while linear independence of $\{u'_{k+1},\ldots,u'_t\}$ then implies that $c_{k+1}=\cdots=c_t=0$. Nonsingularity now implies that there are classes $u_i$ for $i>k$ such that $\phi(u_i,u'_j)=\delta_{i,j}$ for all $j$, and linear independence of the $u_i$'s is immediate.
\end{proof}

Results similar to Theorem \ref{basis} and Proposition \ref{VW} below were also presented, in slightly different situations, in \cite[Rmk 10.3.20]{book} and \cite[Prop A.2.4]{Fromm}.

Let $s$ denote the size of the largest gee of $\bl$. The only $V_S$'s occur in gradings $\le s$, and so the only $W_S$'s occur in grading $\ge m-s$. If $2(m-s)\le m-1$ (i.e., $m\le 2s-1$),
then there can be nontrivial products of $W_S$'s, about which we apparently have little control.

The following simple result gives excellent information about products of $V$ classes times $W$ classes.
In particular, if $m\ge2s$, the entire ring structure is determined! See Corollary \ref{nice}. When $m=4$, this is the case for 64 of the 134 equivalence classes of $\bl$'s, as listed in \cite{web}.

\begin{prop} \label{VW}  Let $\rho_i(T)$ denote the number of elements of $T$ which are greater than $i$. Modulo polynomials in $V_1,\ldots,V_{n-1}$,
\begin{equation}\label{equiv}V_iW_S\equiv\begin{cases}(-1)^{\rho_i(T)}W_T&\text{if }S=T\sqcup\{i\}\\ 0&\text{if }i\not\in S.\end{cases}\end{equation}
In particular, if $s$ is the maximal size of gees and $m-|S|\ge s$, then
\begin{equation}\label{VSeq}V_iW_S=\begin{cases}(-1)^{\rho_i(T)}W_T&\text{if }S=T\sqcup\{i\}\\ 0&\text{if }i\not\in S.\end{cases}\end{equation}
\end{prop}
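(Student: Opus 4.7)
The plan is to expand $V_iW_S$ in the basis of Theorem \ref{basis} and read off its coefficients using the duality pairing $V_TW_{T'}=\delta_{T,T'}W_\emptyset$. Since $V_iW_S\in H^{m-|S|+1}(M(\bl))$, the relevant basis consists of the $V_{S'}$'s with $|S'|=m-|S|+1$ together with the $W_{T'}$'s with $|T'|=|S|-1$; ``modulo polynomials in $V_1,\ldots,V_{n-1}$'' means modulo the span of the first group, so my task reduces to computing the coefficients of the $W_{T'}$'s.

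To isolate the coefficient $b_T$ of $W_T$, I would multiply $V_iW_S$ by $V_T$ and extract the $W_\emptyset$-component in $H^m(M(\bl))\cong\Q$. Writing $V_iW_S=\sum a_{S'}V_{S'}+\sum b_{T'}W_{T'}$, the cross terms $V_TV_{S'}$ all vanish: each would be a nonzero $V$-class in degree $|T|+|S'|=m$, but every subgee has size at most $s\le m-1$, so $T\cup S'$ cannot be a subgee and Theorem \ref{basis} forces the product to be zero. Hence $V_T\cdot V_iW_S = b_T W_\emptyset$.

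On the other hand, $V_T\cdot V_iW_S = (V_TV_i)W_S$. If $i\in T$ this vanishes because $V_i^2=0$. If $i\notin T$, reordering the odd-degree generators into increasing order requires moving $V_i$ past exactly the $V_{t_j}$ with $t_j>i$, each swap contributing a sign, so $V_TV_i=(-1)^{\rho_i(T)}V_{T\cup\{i\}}$. Then $V_{T\cup\{i\}}W_S=\delta_{T\cup\{i\},S}W_\emptyset$ by Theorem \ref{basis}, giving $b_T=(-1)^{\rho_i(T)}$ precisely when $S=T\sqcup\{i\}$ and $b_T=0$ otherwise, which is exactly \eqref{equiv}.

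For the ``In particular'' statement: if $m-|S|\ge s$, then $m-|S|+1>s$, so no subgee has that size and there are no $V$-basis classes at all in the degree of $V_iW_S$. There is nothing to reduce modulo, so the congruence in \eqref{equiv} upgrades to the equality \eqref{VSeq}. The only delicate point in the whole argument is the sign bookkeeping from reordering odd-degree generators; everything else is forced by the non-degeneracy of the Poincar\'e duality pairing built into Theorem \ref{basis}.
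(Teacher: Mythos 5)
Your argument is correct and is essentially the paper's own proof: both expand $V_iW_S$ in the basis of Theorem \ref{basis}, pair against an arbitrary $V_T$ in grading $|S|-1$ (the cross terms dying because $V$-monomials vanish in grading $m$, as $s\le m-1$), and read off the coefficient $(-1)^{\rho_i(T)}\delta_{S,T\cup\{i\}}$ from the reordering sign and the duality pairing. The deduction of \eqref{VSeq} from the absence of $V$-classes in grading $m-|S|+1>s$ also matches the paper.
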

\begin{proof} Write $V_iW_S=\sum\a_PV_P+\sum\a'_QW_Q$ with $\a_P,\a'_Q\in\Q$. If $V_T$ is any monomial in grading $|S|-1$, then
$$(-1)^{\rho_i(T)}\delta_{S,T\cup\{i\}}W_\emptyset=V_TV_iW_S=\sum_Q\a'_Q\delta_{T,Q}W_\emptyset=\a'_TW_\emptyset,$$
as all monomials in the $V$'s are 0 in grading $m$. The first result follows immediately.

The second part follows since $|V_iW_S|=m-|S|+1$ and all polynomials in the $V$'s are 0 in grading $>s$.\end{proof}

\begin{cor} \label{nice} If $m\ge 2s$, where $s$ is the maximal gee size, then the complete  structure of the algebra $H^*(M(\ell))$ is given by Theorem \ref{FHSthm} and (\ref{VSeq}).
\end{cor}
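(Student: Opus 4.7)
The plan is to verify that under the hypothesis $m \ge 2s$, every product of basis elements from Theorem \ref{basis} is pinned down by Theorem \ref{FHSthm} together with (\ref{VSeq}). The basis consists of classes $V_S$ and $W_S$ indexed by subgees $S$ of $\bl$, with $V_S$ in degree $|S|\le s$ and $W_S$ in degree $m-|S|\ge m-s\ge s$. There are three product types to handle. The $V\cdot V$ products are described directly by Theorem \ref{FHSthm}, so nothing more is needed there.

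For a product $V_S \cdot W_{S'}$, I would factor $V_S = V_{s_1}\cdots V_{s_k}$ and apply (\ref{VSeq}) one factor at a time. At each stage the intermediate product is either $0$ or $\pm W_T$ with $|T|\le |S'|\le s$; since $m\ge 2s$, one has $m-|T|\ge s$, so the grading hypothesis required by the second part of Proposition \ref{VW} continues to hold for the next multiplication. (A subset of a subgee is still a subgee, so such $W_T$ really are basis elements.) Iterating gives $V_S W_{S'}=\pm W_{S'\setminus S}$ when $S\subset S'$, and $0$ otherwise. The main ``obstacle'' here is precisely this bookkeeping check that the grading hypothesis propagates through the iteration, but it is automatic from $m\ge 2s$ and the monotone decrease of $|T|$.

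For a product $W_S\cdot W_{S'}$ I would argue by degree: the product lies in cohomological degree $2m-|S|-|S'|\ge 2m-2s\ge m$. Whenever this degree strictly exceeds $m$, the product vanishes because $M(\bl)$ is an $m$-manifold by Proposition \ref{notmax}. The equality case $2m-|S|-|S'|=m$ forces $m=2s$ and $|S|=|S'|=s$, which is precisely the situation $|W_S|+|W_{S'}|=m$ already handled by the last sentence of Theorem \ref{basis}. Hence every $W\cdot W$ product is determined, which together with the two previous cases completes the description of the ring.
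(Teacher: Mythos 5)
Your proposal is correct and follows essentially the same route as the paper, which leaves the corollary as an immediate consequence of the degree count preceding Proposition \ref{VW} (products of $W$'s land in degree $2(m-s)\ge m$, hence vanish by dimension or by the last clause of Theorem \ref{basis}) together with iterated application of (\ref{VSeq}), whose grading hypothesis persists for exactly the reason you check. The only remark is that, as in your argument, the equality case $|W_S|+|W_{S'}|=m$ really does invoke Theorem \ref{basis} rather than (\ref{VSeq}) alone, which is consistent with how the paper intends the corollary to be read.
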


We offer the following illustrative example, in which we have complete information about the product structure. Here we begin using the notation introduced in \cite{H} of writing genes (and gees) which are sets of 1-digit numbers by just concatenating those digits.
\begin{expl} Suppose the genetic code of $\bl$ is $\la 9421, 95\ra$. Then a basis for $H^*(M(\bl))$ is:
\begin{eqnarray*}0&&1\\
1&&V_1,V_2,V_3,V_4,V_5\\
2&&V_1V_2,V_1V_3,V_1V_4,V_2V_3,V_2V_4\\
3&&V_1V_2V_3,V_1V_2V_4,W_{123},W_{124}\\
4&&W_{12},W_{13},W_{14},W_{23},W_{24}\\
5&&W_1,W_2,W_3,W_4,W_5\\
6&&W_{\emptyset}.\end{eqnarray*}
The only nontrivial products of $V$'s are those indicated. All products of $W$'s are 0. The multiplication of $V_i$ by $W_S$ is given, up to sign, by removal of the subscript $i$, if $i\in S$, else 0.
\end{expl}

In the above example, $m=6$ and $s=3$. It is quite possible that a similarly nice product structure might hold in various cases in which $m<2s$. When it does not, we refer to nonzero products of $W$'s
or cases in which (\ref{VSeq}) does not hold as {\it exotic products}. In Section \ref{sec4}, we present an example, due to the referee, in which nontrivial exotic products occur.

One important class of examples in which $s\le 2m$ and so Corollary \ref{nice} applies is the space $M_{2k+1}$ of equilateral $(2k+1)$-gons. Here $\ell=(1,\ldots,1)$ and the genetic code is $\la\{2k+1,2k,\ldots,k+2\}\ra$, and so $s=k-1$ and $m=2k-2$.

\section{Zero-divisor-cup-length}\label{zclsec}
In this section we study the zero-divisor-cup-length $\zcl(M(\bl))$, where $\bl=(\ell_1,\ldots,\ell_n)$, $\bl$ is  generic, and its genetic code does not equal $\la\{n,n-3,\ldots,1\}\ra$. We also discuss the implications for topological complexity.

 Our first result is an upper bound, which will sometimes be sharp. See Table \ref{T1} for a tabulation when $n=8$. Recall that $m=n-3$.
\begin{thm}\label{upper} If $s$ is the largest cardinality of the gees of $\bl$, then $\zcl(M(\bl))\le 2s+2$.\end{thm}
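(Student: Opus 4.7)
The plan is to bound the cup-length $\on{cl}(M(\bl))$ by $s+1$ and then invoke the standard inequality $\zcl(X) \le 2\,\on{cl}(X)$, which yields the desired $\zcl(M(\bl)) \le 2s+2$.

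To bound the cup-length, I would consider a nonzero cup product $\beta_1\cdots\beta_c$ in which each $\beta_i$ is a positive-degree basis element from Theorem~\ref{basis}, so each $\beta_i$ is either some $V_{T_i}$ or some $W_{S_i}$ (with $S_i$ a subgee). The product lies in degree at most $m = n-3$. If every $\beta_i$ is of the $V$-form, Theorem~\ref{basis} forces the $T_i$ to be pairwise disjoint with union a subgee, so $c \le \sum|T_i| \le s$. Otherwise some $\beta_i = W_{S_i}$ has degree $m - |S_i| \ge m - s$, while each remaining factor has degree at least $1$; since the total degree is at most $m$, we obtain $(m-s) + (c-1) \le m$, i.e., $c \le s+1$. (The inequality $s \le m-1$, ensuring $m - s \ge 1$, comes from Proposition~\ref{notmax}.)

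For the reduction $\zcl(X) \le 2\,\on{cl}(X)$, I would argue as follows. The kernel $I$ of the cup-product map $m\colon H^*(X) \otimes H^*(X) \to H^*(X)$ is generated as an ideal by the canonical zero-divisors $\bar v := 1 \otimes v - v \otimes 1$ with $v$ of positive degree, since the identity
$$\sum a_i \otimes b_i \;=\; \sum (a_i \otimes 1)\,\bar b_i$$
holds whenever $\sum a_i b_i = 0$. Using graded commutativity, any product of $k$ zero-divisors becomes a sum of terms of the form $y \cdot \bar v_1\cdots\bar v_k$ with $y \in H^*(X\times X)$ and $v_j$ of positive degree. By linearity we may take each $v_j$ to be a basis element, in which case
$$\bar v_1\cdots\bar v_k \;=\; \sum_{I\subseteq[k]}\pm\, v_I \otimes v_{[k]\setminus I}.$$
For $k = 2s+3$, at least one of $|I|,|[k]\setminus I|$ is $\ge s+2$, so the corresponding subproduct of basis elements vanishes by the cup-length bound, and every term is zero.

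The only nontrivial ingredient is the degree count that gives $\on{cl}(M(\bl)) \le s+1$; the reduction to canonical zero-divisors and the expansion above are standard, and the graded-commutative signs do not affect the vanishing.
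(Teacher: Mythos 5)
Your proof is correct, and it takes a genuinely different route from the paper's. The paper bounds $\zcl$ by a direct degree count on products of the specific generators: it first treats products of $a$ factors $\ol{V_i}$ and $b$ factors $\ol{W_S}$, showing via the estimate $a+b(m-s)\ge 2m+1$ that such a product of length $\ge 2s+3$ lands above dimension $2m$, and then handles arbitrary zero divisors by a similar but more elaborate grading argument. You instead factor the proof into two clean pieces: (i) the cup-length bound $\on{cl}(M(\bl))\le s+1$, which follows from a short degree count on products of the basis elements $V_T$, $W_S$ (a product containing a $W_S$-factor contributes at least $m-s$ to the degree, so at most $s+1$ positive-degree factors fit in degree $m$; a product of only $V$'s has at most $s$ factors), together with $s\le m-1$ from Proposition~\ref{notmax}; and (ii) the general algebraic inequality $\zcl(R)\le 2\,\on{cl}(R)$, which you prove via the observation that $\ker m$ is generated by the canonical zero divisors $\ol v=1\otimes v-v\otimes 1$ with $v$ of positive degree, so that a product of $k=2\on{cl}+1$ zero divisors expands (after reordering, with signs) into terms $\pm\,v_I\otimes v_{[k]\setminus I}$ where $\max(|I|,k-|I|)\ge \on{cl}+1$ forces one side to vanish. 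Both your steps are correct. What your approach buys is a cleaner modular structure: the cup-length estimate is a purely local computation in the ring of Theorem~\ref{basis}, and the $\zcl\le 2\,\on{cl}$ lemma is a reusable, coefficient-independent algebraic fact, so the intricate bookkeeping in the paper's case analysis is replaced by two short arguments. What the paper's direct route buys is self-containment, avoiding appeal to (or having to prove) the general lemma, and its degree estimate $a+b(m-s)\ge 2m+1$ is tuned to the particular bigraded structure of $V$- and $W$-classes. One small caution: the inequality $\zcl(X)\le 2\,\on{cl}(X)$ is not as universally quoted in the TC literature as you suggest (the more familiar chain $\zcl\le\TC-1\le 2\,\on{cat}-2$ only gives $\zcl\le 2\,\on{cl}$ when $\on{cat}=\on{cl}+1$), so you should keep your direct proof of it in the write-up rather than cite it as standard.
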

\begin{proof}  For $u\in H^*(M(\bl))$, let $\overline u=u\ot1-1\ot u$. We first consider products of the form $\prod\overline{u_i}$.
A product of $a$ $\ol{V_i}$'s and $b$ $\ol{W_S}$'s has grading $\ge a+b(m-s)$. If $a>2s$, then $\prod\ol{V_i}=0$, so we may assume that $a\le 2s$. If $a+b\ge 2s+3$, then $b\ge3$ and
$$a+b(m-s)\ge 2s+3-b+b(m-s)=bm+1-(b-2)(s+1)\ge bm+1-(b-2)m=2m+1,$$
and so the product must be 0. We have used that $s\le m-1$ by Proposition \ref{notmax}.

Now we consider the possibility of more general zero divisors.
Let $\a_j$ denote a zero divisor which contains a term $A\ot B$ in which the total number of $V$-factors (resp.~$W$-factors) in $AB$ is $p_j$ (resp.~$q_j$) with $p_j+q_j\ge2$. Its grading is $\ge p_j+q_j(m-s)$. A product of $a$ $\ol{V_i}$'s, $b$ $\ol{W_S}$'s, and $c$ $\a_j$'s, with $a+b+c\ge 2s+3$ will be 0 if $a+\sum p_j>2s$, so we may assume $a+\sum p_j\le 2s$. This product, with $c\ge1$, has grading
\begin{eqnarray*}&\ge&a+b(m-s)+\sum p_j+(m-s)\sum q_j\\
&\ge&a+b(m-s)+\sum p_j+(m-s)(2c-\sum p_j)\\
&\ge&a+(b+2c)(m-s)+(m-s-1)(a-2s)\\
&=&(m-s-1)(a+b+2c-2s)+a+b+2c\\
&\ge&(3+c)(m-s-1)+2s+3+c\\
&=&2m+(c+1)(m-s)\\
&>&2m,\end{eqnarray*}
and hence is 0.
\end{proof}

Next we give our best result for lower bounds.  Recall that the partial order described just before Theorem \ref{FHSthm} is applied also to multisets.

\begin{center}
\begin{minipage}{6.05in}
\begin{thm}\label{lower}
\ \begin{enumerate}
\item[a.] If $G$ and $G'$ are gees of $\bl$, not necessarily distinct, and there is an inequality of multisets $G\cup G'\ge[k]$, then $$\zcl(M(\bl))\ge \begin{cases}k+2&k\equiv m\ (2)\\
    k+1&k\not\equiv m\ (2).\end{cases}$$
\item[b.] If there are no exotic products in $H^*(M(\bl))$,  then (a) is sharp in the sense that if
$$k_0:=\max\{k: \exists\text{ gees $G$ and $G'$ of } \bl \text{ with }G\cup G'\ge[k]\},$$
then
$$\zcl(M(\bl))=\begin{cases}k_0+2&k_0\equiv m\ (2)\\
k_0+1&k_0\not\equiv m\ (2).\end{cases}$$
\end{enumerate}
\end{thm}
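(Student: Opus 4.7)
The plan is to prove (a) by exhibiting explicit products of zero divisors and (b) by combining a counting argument with a swap-of-factors symmetry. For part (a), given gees $G,G'$ with $G\cup G'\ge[k]$, I would first use an order-preserving injection $[k]\hookrightarrow G\cup G'$ of multisets to obtain a partition $[k]=A\sqcup B$ with $A\le G$ and $B\le G'$, so that $A$ and $B$ are subgees and the classes $V_A,V_B,W_A,W_B$ of Theorem~\ref{basis} are nonzero.

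To prove $\zcl(M(\bl))\ge k+1$, I would expand
\[Z_1 \;=\; \overline{V_1}\,\overline{V_2}\cdots\overline{V_k}\cdot\overline{W_A}\]
and single out the summand coming from the splitting that sends each $\overline{V_i}$ with $i\in A$ and $\overline{W_A}$ to the left tensor factor and the remaining $\overline{V_j}$ with $j\in B$ to the right. Iterating Proposition~\ref{VW} gives $V_A W_A=W_\emptyset$, so this summand equals $\pm W_\emptyset\otimes V_B$. Because $W_\emptyset$ has degree $m>s$ while every $V$-polynomial vanishes above degree $s$, no other splitting can contribute a nonzero multiple of $W_\emptyset$ on the left, so the basis vector $W_\emptyset\otimes V_B$ receives a single nonzero coefficient and $Z_1\ne 0$. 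For the $k+2$ bound when $k\equiv m\pmod 2$, set $Z_2=Z_1\cdot\overline{W_B}$; its total degree is $2m$, so $Z_2=\lambda(W_\emptyset\otimes W_\emptyset)$ for some $\lambda\in\Q$. The only summands that can contribute to $\lambda$ are the two ``perfect'' splittings sending $\{V_i:i\in A\}$ together with $W_A$ (respectively $\{V_j:j\in B\}$ together with $W_B$) to the left, and a direct Koszul sign bookkeeping shows the two signs differ by $(-1)^{k+m}$, so they reinforce exactly when $k\equiv m\pmod 2$.

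For part (b), using that the zero-divisor ideal $\ker\mu\subset H^*(M(\bl))\otimes H^*(M(\bl))$ is generated by the elements $\overline u$ for $u$ a basis element, it suffices to bound the length $N$ of a nonzero product $\prod_{i=1}^N\overline{u_i}$ with each $u_i$ of the form $V_A$ or $W_A$. The no-exotic hypothesis $W_SW_{S'}=0$ forces every nonzero term in the expansion to carry at most one $W$-factor on each side, so at most two $W$-atoms can appear. A non-singleton $\overline{V_A}$ can be used at most twice before squaring to zero and is less efficient than using $|A|$ singletons, so the extremal case uses distinct singletons $\overline{V_j}$ indexed by a set $J$. For some term to be nonzero, $J$ must split as a disjoint union of two subgees, which by definition of $k_0$ forces $|J|\le k_0$; hence $N\le k_0+2$.

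To pin down the parity, I would invoke the swap involution $\tau(a\otimes b)=(-1)^{|a||b|}b\otimes a$, which is a ring map on the graded-commutative ring $H^*(M(\bl))\otimes H^*(M(\bl))$ satisfying $\tau(\overline u)=-\overline u$; hence $\tau(Z)=(-1)^N Z$ for any $N$-fold product $Z$. Since $\tau$ acts on $W_\emptyset\otimes W_\emptyset$ by $(-1)^m$, any $Z$ of length $k_0+2$ lying in the top component $H^{2m}$, which is the case $|A|+|B|=k_0$, must vanish when $k_0\not\equiv m\pmod 2$. The principal obstacle is the sub-top case $|A|+|B|>k_0$: there $Z$ has degree $<2m$ and the $\tau$-eigenvalue constraint alone does not force $Z=0$. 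One must instead pair each splitting in the ``$W_A$-on-the-left'' configuration with a mirror splitting in the ``$W_B$-on-the-left'' configuration producing the same basis class, and verify via a Koszul sign computation that the two signs differ by exactly $(-1)^{k_0+m}$. When $A\cap B\ne\emptyset$, multiple splittings per configuration can realize the same basis vector and the pairing bookkeeping becomes intricate; this is the delicate heart of the argument.
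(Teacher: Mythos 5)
Your part (a) matches the paper's argument essentially line for line: both exhibit the products $\prod_{i\in[k]}\overline{V_i}\cdot\overline{W_A}$ and (when $k\equiv m$) $\cdots\cdot\overline{W_B}$, isolate the surviving term $W_\emptyset\otimes V_B$ (resp.\ $W_\emptyset\otimes W_\emptyset$), and rule out cancellation by degree considerations; the paper simply delegates the $k{+}2$ sign check to Remark \ref{rem} and Lemma \ref{ratlem} rather than doing it inline.

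For part (b) there are two genuine gaps. First, the reduction from arbitrary zero divisors to products of \emph{singleton} $\overline{V_j}$'s and $\overline{W_S}$'s is not established. You note that the zero-divisor ideal is generated by $\overline{u}$ over basis elements $u$ (true and standard), but you then dispose of the non-singleton atoms $\overline{V_A}$ by the heuristic that they are ``less efficient than $|A|$ singletons.'' That compares two different products; it does not bound the length of the \emph{given} nonzero product of $N$ factors that might include such atoms. The paper instead proves a precise statement (Lemma \ref{zd}): every zero divisor lies in the ideal generated by the $\Vbar_i$ and $\ol{W_S}$, which lets one rewrite any $N$-fold product $P$ of zero divisors as $\sum\alpha_i P_i$ with each $P_i$ an $N$-fold product of those generators, and then apply the singleton-only analysis to a nonzero $P_i$. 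Second, and more importantly, the parity refinement in the case where the two $W$-indices overlap---your ``sub-top case,'' the paper's $D_3\ne\emptyset$---is exactly where the sharpness content lies, and you explicitly leave it as ``the delicate heart of the argument.'' Your $\tau$-involution observation $\tau(Z)=(-1)^N Z$ is a clean way to kill the top-degree configuration when $N\not\equiv m$, but it says nothing once the product sits below degree $2m$. The paper's Lemma \ref{ratlem} carries out precisely this missing bookkeeping via an explicit bracket calculus, showing that for pairwise disjoint $D_1,D_2,D_3$ one has $\Vbar_{D_1}\Vbar_{D_2}\Vbar_{D_3}\,\ol{W_{D_1\cup D_3}}\,\ol{W_{D_2\cup D_3}}\ne 0$ iff $|D_1\cup D_2\cup D_3|\equiv m\pmod 2$. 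Until both pieces are supplied, the equality in (b) is unproved. Note also that the paper closes the argument differently: it proves the single implication that a nonzero product of $k{+}1$ zero divisors with $k\equiv m\pmod 2$ forces $k_0\ge k$, and then derives both parity cases of (b) simultaneously from it, which avoids having to treat the two parities of $k_0$ separately.
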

\ni The result in (b) says that zcl is the smallest integer $>k_0$ with the same parity as $m$.
\end{minipage}
\end{center}

\medskip
Note that (b) holds if $m\ge 2s$, where $s$ denotes the maximum size of the gees of $\bl$. In the example $M_{2k+1}$ mentioned at the end of Section \ref{EMSSsec},
we obtain $\zcl=2k$, hence $2k+1\le\TC(M_{2k+1})\le 4k-3$, so there is a big gap  here.
\begin{proof} Under the hypothesis of (a), there is a partition $[k]=S\squarecup T$ with $G\ge S$, and $G'\ge T$. Then the following product of $k+1$ zero-divisors is nonzero:
$$\prod_{i\in S}\ol{V_i}\cdot \ol{W_S}\cdot \prod_{j\in T}\ol{V_j}.$$
Indeed, this product contains the nonzero term $W_\emptyset\ot V_T$, and this term cannot be cancelled by any other term in the expansion, since the only way to obtain $W_\emptyset$ is as $V_UW_U$ for some set $U$. The stronger result when $k\equiv m\pmod2$ is obtained using
$$\prod_{i\in S}\ol{V_i}\cdot \ol{W_S}\cdot \prod_{j\in T}\ol{V_j}\cdot\ol{W_T},$$
which is nonzero by Remark \ref{rem}.

Part (a) implies $\ge$ in (b). We will prove $\le$ by showing
 that, under the assumption that there are no exotic products, if there is a nonzero product of $k+1$ zero divisors with $k\equiv m\pmod2$, then there are gees $G$ and $G'$ of $\ell$ such that $G\cup G'\ge[k]$. This says that if $\zcl\ge k+1$ with $k\equiv m\pmod2$, then $k_0\ge k$. Thus if $\zcl\ge\begin{cases}k_0+3&k_0\equiv m\\
 k_0+2&k_0\not\equiv m,\end{cases}$ then $k_0\ge k_0+2$ (resp., $k_0+1$), a contradiction.

 We begin by considering the case when all the zero divisors are of the form $\ol{V_i}$ or $\ol{W_S}$. Since products of $W$'s are 0, there cannot be more than two $\Wbar$'s. The case of no $\Wbar$'s is easiest and is omitted. Denote $\Vbar_S:=\prod_{i\in S}\ol{V_i}$. Note the distinction: $\ol{W_S}=W_S\ot1+1\ot W_S$, whereas $\Vbar_S=\prod_{i\in S}(V_i\ot1+1\ot V_i)$,
 with the usual convention that the entries of $S$ are listed in increasing order.

For the case of one $\Wbar$, assume $\Vbar_{T_1}\Vbar_{T_2}\ol{W_S}\ne0$ with $T_1\subset S$, $T_2$ and $S$ disjoint, and $|T_1\cup T_2|\ge k$.
Since $W_S\ne0$, $S\subset G$ for some gee $G$. The product expands, up to $\pm$ signs on terms, as
$$\sum_{T'\subset T_1} V_{T_2}V_{T'}\ot W_{S-T'}+W_{S-T'}\ot V_{T_2}V_{T'}.$$
For this to be nonzero, we must have $V_{T_2}\ne0$, and so $T_2\le G'$ for some gee $G'$. Thus
$$[k]\le T_1\cup T_2\le G\cup G'.$$

For the case of two $\Wbar$'s, we may assume that
\begin{equation}\label{prd}\Vbar_{E_1}\Vbar_{E_2}\Vbar_{E_3}\ol{W_{D_1\cup D_3}}\ \ol{W_{D_2\cup D_3}}\ne0,\end{equation}
with $D_1$, $D_2$, and $D_3$ disjoint,  $E_i\subset D_i$, and $|E_1\cup E_2\cup E_3|=k-1$ with $k\equiv m$ mod 2.
Note that we cannot have a factor $\Vbar_{E_4}$ with $E_4$ disjoint from $D_1\cup D_2\cup D_3$, since
$$\ol{W_{D_1\cup D_3}}\ \ol{W_{D_2\cup D_3}}=-W_{D_1\cup D_3}\ot W_{D_2\cup D_3}\pm W_{D_2\cup D_3}\ot W_{D_1\cup D_3},$$
and the product of $\Vbar_{E_4}$ with this would be 0.

Since $W_{D_i\cup D_3}\ne0$ for $i=1,2$, we have
$$E_i\cup E_3\subset D_i\cup D_3\le G_i$$
for gees $G_i$. Thus
$$G_1\cup G_2\ge D_1\cup D_2\cup D_3\supset E_1\cup E_2\cup E_3\ge
[k-1],$$
and so $G_1\cup G_2\ge [k]$ unless  each $D_i=E_i$, $1\le i\le3$. But
in this case the LHS of (\ref{prd}) is 0 by Lemma \ref{ratlem}, since $|D_1\cup D_2\cup D_3|\not\equiv m\pmod2$ in this case. This completes the proof when all zero divisors are of the form $\ol{V_i}$ or $\ol{W_S}$.

Let $R=H^*(M(\ell))$. In Lemma \ref{zd}, we show that any product $P$ of $z$ zero divisors can be written as $\sum \a_iP_i$, where $\a_i\in R\ot R$ and $P_i$ is a product of $z$ factors of the form $\Vbar_i$ or $\ol{W_S}$. If $P\ne0$, then some $P_i$ must be nonzero, and so by the above argument there exist gees $G$ and $G'$ as claimed.
\end{proof}

The following two lemmas were used in the preceding proof.
\begin{lem}\label{ratlem} If there are no exotic products in $H^*(M(\ell))$, and $D_1$, $D_2$, and $D_3$ are pairwise disjoint subgees, then
$$\Vbar_{D_1}\Vbar_{D_2}\Vbar_{D_3}\ol{W_{D_1\cup D_3}}\,\ol{W_{D_2\cup D_3}}\ne0\text{ iff }|D_1\cup D_2\cup D_3|\equiv m\pmod2.$$
\end{lem}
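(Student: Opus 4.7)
The plan is to expand $P := \Vbar_{D_1}\Vbar_{D_2}\Vbar_{D_3}\,\ol{W_{D_1\cup D_3}}\,\ol{W_{D_2\cup D_3}}$ explicitly in the basis of $H^*(M(\ell))\otimes H^*(M(\ell))$ and show that its coefficients all vanish precisely when $|D_1\cup D_2\cup D_3|\not\equiv m\pmod 2$. Write $D=D_1\cup D_2\cup D_3$, $d_i=|D_i|$, $a=d_1+d_3$, $b=d_2+d_3$.

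First, the no-exotic-products hypothesis forces $W_{D_1\cup D_3}\cdot W_{D_2\cup D_3}=0$, so only the two cross terms survive in the expansion of $\ol{W_{D_1\cup D_3}}\,\ol{W_{D_2\cup D_3}}$, namely
$$-W_{D_1\cup D_3}\otimes W_{D_2\cup D_3}\;-\;(-1)^{(m-a)(m-b)}W_{D_2\cup D_3}\otimes W_{D_1\cup D_3}.$$
Next I would expand $\Vbar_{D_1}\Vbar_{D_2}\Vbar_{D_3}=\sum_{L\sqcup R=D}\pm V_L\otimes V_R$ over partitions of $D$. By (\ref{VSeq}), multiplying by the first cross term forces $L\subseteq D_1\cup D_3$ and $R\subseteq D_2\cup D_3$, hence $L=D_1\cup(D_3-T)$ and $R=D_2\cup T$ for some $T\subseteq D_3$; the second cross term analogously gives $L'=D_2\cup(D_3-T)$ and $R'=D_1\cup T$. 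Iterating (\ref{VSeq}) yields
$$V_A W_{A\cup B}=(-1)^{\langle A,B\rangle}W_B\qquad(A\cap B=\emptyset),$$
where $\langle A,B\rangle:=\#\{(i,j):i\in A,\,j\in B,\,i<j\}$. Applying this to both tensor factors, each surviving term is a signed multiple of $W_T\otimes W_{D_3-T}$, so
$$P=\sum_{T\subseteq D_3}(\alpha_T+\beta_T)\,W_T\otimes W_{D_3-T},$$
where $\alpha_T,\beta_T\in\{\pm1\}$ are the contributions from the two cross terms.

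The key calculation, and the main obstacle, is to show $\beta_T/\alpha_T=(-1)^{m+|D|}$ for every $T$. The ratio is a product of four sign contributions: $(-1)^{(m-a)(m-b)}$ from swapping the barred $W$'s; the graded-commutativity signs $(-1)^{|R|(m-a)}$ and $(-1)^{|R'|(m-b)}$ arising when $V_L\otimes V_R$ (resp.\ $V_{L'}\otimes V_{R'}$) is moved past the $W\otimes W$ factor; the contraction signs $(-1)^{\langle L,T\rangle+\langle R,D_3-T\rangle}$ and their primed analogues from the identity above; and the shuffle sign that relates the term $V_L\otimes V_R$ to $V_{L'}\otimes V_{R'}$ inside $\Vbar_D$. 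Using $\langle A,B\rangle+\langle B,A\rangle=|A||B|$ for disjoint $A,B$ repeatedly, all terms depending on $T$ and on the interleaving of the $D_i$ cancel mod $2$, and the exponent collapses to $m+d_1+d_2+d_3=m+|D|$.

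Granting this, $\alpha_T+\beta_T=(1+(-1)^{m+|D|})\alpha_T\in\{\pm 2,0\}$. Since the classes $W_T\otimes W_{D_3-T}$ for $T\subseteq D_3$ are linearly independent and each $\alpha_T\ne 0$, we conclude that $P\ne 0$ iff $m+|D|$ is even, i.e., iff $|D_1\cup D_2\cup D_3|\equiv m\pmod 2$, as claimed.
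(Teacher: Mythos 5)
Your framework is sound and, for what it is worth, your key claim is true: with no exotic products only the two cross terms of $\ol{W_{D_1\cup D_3}}\,\ol{W_{D_2\cup D_3}}$ survive, the only monomials $V_L\ot V_R$ of $\Vbar_{D_1}\Vbar_{D_2}\Vbar_{D_3}$ that can contribute are those with $L=D_1\cup(D_3-T)$, $R=D_2\cup T$ (and the primed analogues), your iterated form of (\ref{VSeq}), $V_AW_{A\cup B}=(-1)^{\la A,B\ra}W_B$ for disjoint $A,B$, is correct, and your catalogue of the sign sources entering $\beta_T/\alpha_T$ is complete. But the lemma lives or dies on the identity $\beta_T/\alpha_T=(-1)^{m+|D|}$ (writing $D=D_1\cup D_2\cup D_3$), and that is exactly the step you do not prove: you assert that the shuffle sign relating $V_L\ot V_R$ to $V_{L'}\ot V_{R'}$, the Koszul signs $(-1)^{|R|(m-a)}$ and $(-1)^{|R'|(m-b)}$, the swap sign $(-1)^{(m-a)(m-b)}$, and the four contraction signs ``cancel mod 2'' and collapse to $m+|D|$, and then proceed with ``granting this.'' That uniform-in-$T$ (and uniform in the interleaving of $D_1,D_2,D_3$ inside $[n-1]$) cancellation is the entire analytic content of the lemma --- everything else in your write-up is routine --- and it is precisely where a parity error would hide. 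As written, the proof is therefore incomplete; it needs the explicit bookkeeping (or an induction) establishing the ratio.

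For comparison, the paper's proof sidesteps the global sign ledger by never expanding $\Vbar_{D_1}\Vbar_{D_2}\Vbar_{D_3}$: it keeps the partial product in the compact form $\la S,T,e\ra=W_S\ot W_T+(-1)^eW_T\ot W_S$ and contracts one $\ol{V_k}$ at a time via (\ref{sublem}) and (\ref{36}), so the relative sign of the two summands is carried in the single exponent $e$; after $|D_2|$ and then $|D_1|$ contractions a one-line computation gives $e\equiv m+d_1+d_2+d_3\pmod 2$, and the remaining factor $\Vbar_{D_3}$ either annihilates $\la D_3,D_3,e\ra$ or yields $\pm2(W_\emptyset\ot W_{D_3}+\cdots)\ne0$. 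If you want to keep your ``expand everything'' route, the cleanest fix is to prove your ratio claim by peeling off one element of $D_1\cup D_2$ at a time --- which essentially reconstructs the paper's contraction identities --- rather than attempting the four-way sign cancellation in one stroke.
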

\begin{proof} The notation
$$\la S,T,e\ra:=W_S\ot W_T+(-1)^e W_T\ot W_S,$$
with $S$ and $T$ disjoint,
will be useful in this proof. We begin with the observation that
if $k\not\in S\cup T$, then
\begin{equation}\label{sublem}\ol{V_k}\la S\cup\{k\},T,e\ra=\pm\la S,T,e+|W_T|+1\ra.\end{equation}
Indeed, since $|V_k|=1$, we have
\begin{eqnarray*}&&(V_k\ot1-1\ot V_k)(W_{S\cup\{k\}}\ot W_T+(-1)^eW_T\ot W_{S\cup\{k\}})\\
&=&V_kW_{S\cup\{k\}}\ot W_T-(-1)^{|W_T|}(-1)^eW_T\ot V_kW_{S\cup\{k\}}.\end{eqnarray*}
The $\pm$, which is not important, is $(-1)^{\rho_k(S)}$ from Proposition \ref{VW}.
Similarly,
\begin{equation}\label{36} \Vbar_k\la S,T\cup\{k\},e\ra=\pm\la S,T,e+|W_S|+1\ra.\end{equation}

Since products of $W$'s are  0 by assumption, we have
$$\ol{W_{D_1\cup D_3}}\,\ol{W_{D_2\cup D_3}}=-\la D_1\cup D_3,D_2\cup D_3, |W_{D_1\cup D_3}|\cdot|W_{D_2\cup D_3}|\ra.$$
Now apply (\ref{36}) $|D_2|$ times to this to eliminate the elements of $D_2$, each time adding $|W_{D_1\cup D_3}|+1$ to the third component of $\la-,-,-\ra$, obtaining
$$\Vbar_{D_2}\ol{W_{D_1\cup D_3}}\,\ol{W_{D_2\cup D_3}}=\pm\la D_1\cup D_3,D_3,|W_{D_1\cup D_3}|\cdot|W_{D_2\cup D_3}|+|D_2|(|W_{D_1\cup D_3}|+1)\ra.$$
Let $d_i=|D_i|$ and note that $|W_{D_i}|=m-d_i$.
Now apply (\ref{sublem}) $d_1$ times, eliminating the elements of $D_1$. We obtain that the expression in the lemma equals
\begin{equation}\pm\Vbar_{D_3}\ \la D_3,D_3,f\ra\label{ex}\end{equation}
with $$f=(m-d_1-d_3)(m-d_2-d_3)+d_2(m-d_1-d_3+1)+d_1(m-d_3+1)\equiv m+d_1+d_2+d_3\pmod2.$$
Thus (\ref{ex}) equals 0 if $m+d_1+d_2+d_3$ is odd, while if $m+d_1+d_2+d_3$ is even, it equals
$$\pm2\Vbar_{D_3}(W_{D_3}\ot W_{D_3})=\pm2(W_\phi\ot W_{D_3}+\text{other terms})\ne0.$$
\end{proof}

\begin{rmk}\label{rem} {\rm Note that the backwards implication in Lemma \ref{ratlem} is true without the assumption of no exotic products because the only additional terms will involve just products of $V$'s, and these cannot cancel $W_\phi\ot W_{D_1}$.}\end{rmk}

\begin{lem}\label{zd} If $R=H^*(M(\ell))$ has no exotic products, then every zero divisor of $R\ot R$ is in the ideal spanned by elements of the form $\Vbar_i$ and $\ol{W_S}$.
\end{lem}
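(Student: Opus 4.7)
My plan is as follows. A zero divisor of $R\ot R$ is by definition any element of $\ker(m)$, where $m:R\ot R\to R$ is multiplication. First I would verify the standard algebraic fact that $\ker(m)$ equals the ideal of $R\ot R$ generated by the set $\{\ol x:x\in R\}$. The inclusion $\supseteq$ is immediate since $m(\ol x)=0$. For $\subseteq$, given $\a=\sum a_i\ot b_i$ with $\sum a_ib_i=0$, the identity $a\ot b=ab\ot 1-(a\ot 1)\ol b$ (verified by expanding $(a\ot 1)(b\ot 1-1\ot b)$) rearranges to
$$\a=\Bigl(\sum a_ib_i\Bigr)\ot 1-\sum(a_i\ot 1)\ol{b_i}=-\sum(a_i\ot 1)\ol{b_i},$$
which manifestly lies in the ideal generated by the $\ol{b_i}$'s.

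Next I would reduce from arbitrary $\ol x$ to the basic generators $\ol{V_i}$ and $\ol{W_S}$. By Theorem \ref{basis}, $R$ has a $\Q$-basis consisting of $1$, the $V_S$'s, and the $W_S$'s (with $S$ a subgee), and $\ol 1=0$, so each $\ol x$ is a $\Q$-linear combination of $\ol{V_S}$'s and $\ol{W_S}$'s. It therefore suffices to show that $\ol{V_S}$ with $|S|\ge 2$ lies in the ideal generated by $\{\ol{V_i}:i\in S\}$. For this I would establish the Leibniz-type identity
$$\ol{xy}=\ol x\,(y\ot 1)+(1\ot x)\,\ol y$$
by direct expansion, and then apply it inductively to the factorization $V_S=V_{s_1}(V_{s_2}\cdots V_{s_k})$. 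Combining the two steps yields the desired conclusion: every zero divisor can be written as an $R\ot R$-linear combination of basic elements $\ol{V_i}$ and $\ol{W_S}$.

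The main bookkeeping hurdle is sign tracking in the graded-commutative algebra $R\ot R$. Since $|V_i|=1$, one has $(1\ot V_i)(V_j\ot 1)=-V_j\ot V_i$, and I would check that when computing $\ol x\,(y\ot 1)+(1\ot x)\,\ol y$ the two unwanted cross terms $\pm(-1)^{|x||y|}y\ot x$ cancel, leaving exactly $xy\ot 1-1\ot xy$. Beyond this routine check, the argument is purely formal. It is worth remarking that the hypothesis ``no exotic products'' does not appear to enter my argument: the proof is valid for any graded-commutative $\Q$-algebra $R$ possessing the additive basis of Theorem \ref{basis}. I suspect the hypothesis is retained in the statement only for consistency with the setting in which Lemma \ref{zd} is invoked in the proof of Theorem \ref{lower}.
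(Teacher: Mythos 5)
Your argument is correct, and it takes a genuinely different --- and cleaner --- route than the paper's. The paper decomposes an arbitrary zero divisor into three pieces according to monomial type ($W\ot W$, $V\ot V$, and $V\ot W$ together with $W\ot V$), invokes the no-exotic-products hypothesis to conclude that each piece is separately in $\ker m$ (since then $m$ sends the three types to $0$, to $V$-classes, and to $W$-classes respectively, with no mixing), and then handles each type by a case analysis requiring two separate inductions. You instead use the standard fact that $\ker m$ is the ideal of $R\ot R$ generated by $\{\ol{x}:x\in R\}$, reduce by linearity and the additive basis of Theorem \ref{basis} to the generators $\ol{V_S}$ and $\ol{W_S}$, and dispose of $\ol{V_S}$ with $|S|\ge 2$ by the Leibniz identity $\ol{xy}=\ol{x}\,(y\ot 1)+(1\ot x)\,\ol{y}$ and induction on $|S|$; your sign-cancellation check is also correct. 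Your observation that the no-exotic-products hypothesis plays no role in this proof is accurate --- the lemma is in fact a purely formal consequence of Theorem \ref{basis} alone, i.e., of the fact that $R$ is generated as a $\Q$-algebra by the $V_i$ and the $W_S$. The paper's proof, by contrast, genuinely uses the hypothesis in its opening decomposition step (if there were exotic products, $m$ applied to the $V\ot W$ part could produce $V$-classes and then the three pieces need not individually lie in $\ker m$). Since Lemma \ref{zd} is only applied inside the proof of Theorem \ref{lower}(b), where the hypothesis is present anyway, retaining it in the statement is harmless, but your approach is more general and avoids the bookkeeping of the paper's three-case induction.
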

\begin{proof} The vector space $R\ot R$ is spanned by monomials of three types: (1) $W_S\ot W_T$; (2) $V_S\ot V_T$; and (3) $V_S\ot W_T$ and $W_T\ot V_S$. If a zero divisor is written as $Z_1+Z_2+Z_3$, where $Z_i$ is of type $i$, then each $Z_i$ must be a zero divisor, since their images under multiplication $m$ are, respectively 0, $V$'s, and $W$'s. We show that each type of zero divisor is in the claimed ideal.

(1) Every monomial $W_S\ot W_T$ is a zero divisor and can be written as $-(W_S\ot1)(W_T\ot 1-1\ot W_T)$.

(2) There are three types of zero divisors of this type. (a) One of the form $V_iV_S\ot V_iV_T$ equals $\pm(V_iV_S\ot V_T)(V_i\ot1-1\ot V_i)$. (b) If $V_SV_T=0$ in $R$, then $V_S\ot V_T$ is a zero divisor and equals $-(V_S\ot 1)(V_T\ot1-1\ot V_T)$, and one easily shows $V_T\ot1-1\ot V_T$ is in the ideal by induction on $|T|$. (c) If $V_S\ne0$ and $S=S_i\sqcup T_i$ so that $V_{S_i}V_{T_i}=(-1)^{\eps_i}V_S$, then $\sum c_iV_{S_i}\ot V_{T_i}$ is a zero divisor if $\sum(-1)^{\eps_i}c_i=0$. We prove by induction on $|S|$ that these zero divisors have the required form. WLOG, assume that $1\in S$. For every $i$ with $1\in T_i$, let $\widetilde T_i=T_i-\{1\}$, and write
$$V_{S_i}\ot V_{T_i}=\pm(V_1\ot1-1\ot V_1)V_{S_i}\ot V_{\widetilde T_i}\pm V_1V_{S_i}\ot V_{\widetilde T_i}.$$
In this way, the given zero divisor can be written as a sum of terms of the desired form plus a sum of terms with $V_1$ in the left factor of each. These latter terms can be written as $V_1\ot1$ times a sum with smaller $|S|$, and this can be written in the desired form by the induction hypothesis.

(3) There are zero divisors of the form
$$\sum_i c_iV_{S_i}\ot W_{T\cup S_i}+\sum_j d_jW_{T\cup S_j}\ot V_{S_j},$$
with $S_i$ and $S_j$ disjoint from $T$, $V_{S_i}W_{T\cup S_i}=(-1)^{\eps_i}W_T$, $W_{T\cup S_j}V_{S_j}=(-1)^{\eps_j}W_T$, and $\sum(-1)^{\eps_i}c_i+\sum(-1)^{\eps_j}d_j=0$. We claim that each term $V_{S_i}\ot W_{T\cup S_i}$ is equivalent, mod terms of the desired form, to $1\ot W_T$, and similarly for $W_{T\cup S_j}\ot V_{S_j}$. Thus the given zero divisor is equivalent, mod things of the desired form, to a multiple of $W_T\ot1-1\ot W_T$.

The claim is proved by induction on $|V_{S_i}|$, noting that if $s$ is the smallest element of $S_i$, then
$$V_{S_i}\ot W_{T\cup S_i}=(V_s\ot1-1\ot V_s)(V_{S_i-\{s\}}\ot W_{T\cup S_i})\pm V_{S_i-\{s\}}\ot W_{T\cup S_i\cup\{s\}}.$$
\end{proof}

 Our zcl results depend only on the gees and the parity of $n$, and not on the value of $n$.  (Recall $m=n-3$.) However the possible gees depend on $n$.  Of course, the numbers which occur in the gees must be less than $n$, but also, if $G$ and $G'$ are gees (not necessarily distinct), then we cannot have $[n-1]-G'\le G\cup\{n\}$, for then $G\cup\{n\}$ would be both short and  long. Thus, for example, $8531$ is an allowable gene, but $7531 $ is not, since $642<7531$ but $7642\not<8531$.

There are 2469 equivalence classes of nonempty  spaces $M(\bl)$ with $n=8$. Genes for these are listed in \cite{web}. We perform an analysis of what we can say about the zcl and TC of these.
Since $n=8$, each satisfies $\TC(M(\bl))\le 11$ by (\ref{TCM}). As we discuss below in more detail, for most of them we can assert that $\zcl(M(\bl))\ge7$, and so $\TC(M(\bl))\ge8$. For most of them we can only assert lower bounds for zcl, due to the possibility of exotic products. We emphasize that the following analysis pertains to the case $n=8$.

As discussed in Proposition \ref{notmax} and the paragraph which preceded it, there is only one $\bl$ with a gee of size 5. This $M(\bl)$ is homeomorphic to $T^5\sqcup T^5$ with topological complexity 6. This is a truly special case, as it is the only disconnected $M(\bl)$.

Other special cases which we wish to exclude from the analysis below are those in Table \ref{cases} below, which are completely understood by elementary means. Sides of ``length 0'' stand for sides of very small length.
The identification as spaces is from \cite[Prop 2.1]{Geom} and \cite[Expl 6.5]{H}, with $T^k$ a $k$-torus. The upper bound for TC follows from \cite[Prop 4.41 and Thm 4.49]{F}, and the lower bound from Theorem \ref{lower}(a) and (\ref{zclz}).
We thank the referee for suggesting this table.

\begin{center}
\begin{minipage}{6.5in}
\begin{tab}{Special cases}

\label{cases}

\begin{center}
\renewcommand\arraystretch{1.2}
\begin{tabular}{ccccc}

$\ell$&Gen.~code&space&$\zcl$&$\TC$\\
\hline
$(1,1,1,1,1,1,1,6)$&$\la8\ra$&$S^5$&$1$&$2$\\
$(0,1,1,1,1,1,1,5)$&$\la81\ra$&$S^4\times T^1$&$3$&$4$\\
$(0,0,1,1,1,1,1,4)$&$\la821\ra$&$S^3\times T^2$&$3$&$4$\\
$(0,0,0,1,1,1,1,3)$&$\la8321\ra$&$S^2\times T^3$&$5$&$6$\\
$(0,0,0,0,1,1,1,2)$&$\la84321\ra$&$T^5$&$5$&$6$

\end{tabular}

\end{center}

\end{tab}
\end{minipage}
\end{center}

\medskip
There are 768 $\bl$'s whose largest gee has size 4. For all of them, we can deduce only $\zcl(M(\bl))\ge7$, using Theorem \ref{lower}(a) and the following result.
\begin{prop}\label{propless} Suppose $G$ and $G'$ are subsets of $[7]$,  not necessarily distinct, with neither strictly less than the other and with $\max(|G|,|G'|)=4$. Assume also that it is not the case
that $G=G'=4321$, and it is not the case that $[7]-G'\le G\cup\{8\}$. Then $G\cup G'\ge[5]$ but $G\cup G'\not\ge[6]$.\end{prop}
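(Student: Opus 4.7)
The plan is to convert every dominance comparison into an inequality on the counting function $\alpha_v(X):=|\{x\in X:x\ge v\}|$ (counted with multiplicity for multisets), via the equivalence $A\le B$ iff $\alpha_v(A)\le\alpha_v(B)$ for every $v$. With this criterion, each assertion of the form $S\le T$ or $S\not\le T$ becomes an elementary arithmetic statement.

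For the upper bound $G\cup G'\not\ge[6]$, let $[7]-G'=\{y_1<y_2<y_3\}$, so $y_1\le 5$ and $y_2\le 6$. Since $8\ge y_3$, the relation $[7]-G'\le G\cup\{8\}$ is equivalent to the existence of a size-$2$ sub-multiset of $G$ dominating $(y_1,y_2)$, i.e.\ to $G\ge\{y_1,y_2\}$; hence the hypothesis says exactly $G\not\ge\{y_1,y_2\}$. Writing $G'=[7]-\{y_1,y_2,y_3\}$, one reads off $\alpha_{y_1}(G')=(8-y_1)-3=5-y_1$ and $\alpha_{y_2}(G')=(8-y_2)-2=6-y_2$. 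Now if $[6]\le G\cup G'$, then $\alpha_v(G)+\alpha_v(G')\ge 7-v$ for every $v\in\{1,\ldots,6\}$; plugging in $v=y_1$ and $v=y_2$ yields $\alpha_{y_1}(G)\ge 2$ and $\alpha_{y_2}(G)\ge 1$, which is exactly $G\ge\{y_1,y_2\}$, contradicting the hypothesis.

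For the lower bound $G\cup G'\ge[5]$, I argue by contradiction: suppose $\alpha_v(G\cup G')\le 5-v$ for some $v\in\{1,\ldots,5\}$. The hypothesis that $G$ is not strictly less than $G'$ forces $|G|\ge 1$ (else $G=\emptyset<G'$), so $\alpha_1(G\cup G')=|G|+4\ge 5$, which rules out $v=1$. For each $v\in\{2,3,4,5\}$, one has $\alpha_v(G')\ge 5-v$ because $|G'|=4$ and at most $v-1$ of its elements can lie in $[v-1]$; combined with $\alpha_v(G\cup G')\le 5-v$, this forces $\alpha_v(G')=5-v$ and $\alpha_v(G)=0$, so $G'\supset[v-1]$ and $G\subset[v-1]$. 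Hence $G\subseteq G'$ properly, contradicting the strictness hypothesis, except in the borderline case $v=5$, where $|G|\le 4=|G'|$ allows $G=G'=4321$; this case is excluded by explicit assumption.

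The main obstacle is modest and lies almost entirely in part (i): one must carefully check the set-vs-multiset accounting at each of the four levels $v=2,3,4,5$ to see that $G$ really is forced to sit properly inside $G'$. Part (ii), by contrast, drops out from the one-step arithmetic identity using the complementary description of $G'$ in $[7]$, and it is this crisp computation — which replaces what would otherwise be a large casework — that makes the overall proof short.
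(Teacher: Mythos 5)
Your counting-function criterion --- $A\le B$ iff $\alpha_v(A)\le\alpha_v(B)$ for all $v$, where $\alpha_v$ counts elements $\ge v$ with multiplicity --- is a correct characterization of the partial order, and the two computations you build on it are sound as far as they go. The problem is that you tacitly assume $|G'|=4$ throughout: in the upper bound you write $[7]-G'=\{y_1<y_2<y_3\}$ and $\alpha_{y_1}(G')=5-y_1$, $\alpha_{y_2}(G')=6-y_2$, and in the lower bound you use $\alpha_1(G\cup G')=|G|+4$ and $\alpha_v(G')\ge 5-v$ ``because $|G'|=4$''. The proposition only assumes $\max(|G|,|G'|)=4$. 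For the lower bound this is repairable by an explicit relabeling, since the hypotheses used there (neither gee strictly less than the other, not both equal to $4321$) are symmetric in $G$ and $G'$. For the upper bound it is a genuine gap: the hypothesis $[7]-G'\not\le G\cup\{8\}$ is \emph{not} symmetric, so the roles of $G$ and $G'$ cannot simply be swapped, and the case $|G|=4$ with $|G'|\in\{2,3\}$, which the statement allows (gees of one length vector need not all have the same size), is not covered by your computation. The fix stays entirely inside your framework and needs no case split on $|G'|$: for $v\in[7]$ one has $\alpha_v([7]-G')=(8-v)-\alpha_v(G')$ and $\alpha_v(G\cup\{8\})=\alpha_v(G)+1$, so the assumption $G\cup G'\ge[6]$, i.e.\ $\alpha_v(G)+\alpha_v(G')\ge 7-v$ for $v\in[6]$ (and trivially for $v=7$), gives $[7]-G'\le G\cup\{8\}$ outright, contradicting the hypothesis.

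Apart from this, your route is genuinely different from the paper's. For the second conclusion the paper argues by forming the union with $G'$: from $G\cup G'\ge[6]$ it gets $([7]-G')\cup G'=[7]\le [6]\cup\{8\}\le (G\cup\{8\})\cup G'$ and then invokes a cancellation property of the order under multiset union; your $\alpha_v$-identity above is exactly that cancellation argument made explicit, and in its general form it is arguably cleaner than your three-element special case. For the first conclusion the paper offers only a one-line observation, whereas your level-by-level analysis, which pins down $G\subseteq[v-1]\subseteq G'$ and then uses non-comparability together with the exclusion of $G=G'=4321$, is more complete and self-contained --- it just needs the explicit ``let the size-$4$ gee be the one called $G'$'' step to be legitimate.
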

\begin{proof} The first conclusion follows easily from the observation that if $G=4321$, then $5\in G'$. For the second, if $G\cup G'\ge[6]$ then applying $\cup G'$ to the false statement $[7]-G'\le G\cup\{8\}$ would yield a true statement,
and the ordering that we are using for multisets has a cancellation property for unions.\end{proof}

There are 1569 $\bl$'s whose largest gee has size 3.  By Theorem \ref{upper}, these all satisfy $\zcl\le8$. For these, we again cannot rule out exotic products, so we cannot use Theorem \ref{lower}(b) to get sharp zcl results.
Of these, 929 have a gee $G\ge531$, and this satisfies $G\cup G\ge[5]$, hence $\zcl\ge7$. In addition to these, there are 524 with distinct gees satisfying $G\cup G'\ge[5]$.
Combining these with the 768 $\bl$'s with some $|\text{gee}|=4$, we find that 2221 of the 2469
$\bl$'s with $n=8$ satisfy $\zcl(M(\bl))\ge7$. There are another 116 $\bl$'s with largest gee of size 3 for which we can only assert $\zcl\ge5$. An example of a genetic code of this type is $\la8421,843,862,871\ra$.

There are 120 $\bl$'s whose largest gee has size 2. For these, exotic products are not possible and we can assert the precise value of zcl. Of these, 85 have a gee $G\ge42$ and since $G\cup G\ge[4]$, they have $\zcl(M(\bl))=5$. In addition to these, there are 10 having distinct gees satisfying $G\cup G'\ge[4]$ and so again zcl$=5$. There are 25 others for which we only have $G\cup G'\ge[3]$, but still zcl$=5$.
Finally, there are 6 $\bl$'s with largest gee of size 1. These satisfy $\zcl(M(\bl))=3$.

In Table \ref{T1}, we summarize what we can say about zcl when $n=8$, omitting the six special cases described earlier. Keep in mind that
$$1+\zcl\le\TC\le11.$$
In the table, $s$ denotes the size of the largest gee, and $\#$ denotes the number of distinct homeomorphism classes of 8-gons having the property.

\begin{minipage}{6.5in}
\begin{center}
\begin{tab}{Number of types of 8-gon spaces}

\label{T1}

\begin{center}

\begin{tabular}{ccr}
$s$&$\zcl$&$\#$\\
\hline
$1$&$3$&$6$\\
$2$&$5$&$120$\\
$3$&$5,6,7$ {\rm or }$8$&$116$\\
$3$&$7$ {\rm or }$8$&$1453$\\
$4$&$7,8,9$ {\rm or }$10$&$768$
\end{tabular}

\end{center}

\end{tab}
\end{center}
\end{minipage}

For general $m$($=n-3$), the largest gees (with one exception) have size $s=m-1$, and so Theorem \ref{upper} allows the {\it possibility} of zcl as large as $2m$, which would imply $\TC=2m+1$ by (\ref{TCM}). However, this would require many nontrivial exotic products. By an argument similar to Proposition \ref{propless}, all we can assert from Theorem \ref{lower}(a) is $\zcl\ge m+2$ (when $s=m-1$).
If $s\le[m/2]$, then we can determine the precise zcl, which can be as large as $2s+2$, so we can obtain $m+1$ or $m+2$ as zcl,  yielding a lower bound for TC only roughly half the upper bound given by (\ref{TCM}).

\section{An example with a nontrivial exotic product}\label{sec4}
 The following example, provided by the referee, suggests that additional geometric information may be needed in finding exotic products and sharper zcl bounds.

\begin{thm} Let $X=M(\ell)$ with genetic code $\la 632\ra$. There are exotic products in $H^*(X;\Q)$. The  (rational) zcl of $X$ is 6, and $\TC(X)=7$.
\end{thm}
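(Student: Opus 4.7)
For genetic code $\la 632\ra$ we have $n=6$, $m=3$, $s=2$, so Corollary \ref{nice} does not apply and exotic products are a priori possible in $H^2$. By Theorem \ref{basis}, the rational cohomology is $14$-dimensional, with basis $1$ (degree $0$); $V_1,V_2,V_3,W_{12},W_{13},W_{23}$ (degree $1$); $V_1V_2,V_1V_3,V_2V_3,W_1,W_2,W_3$ (degree $2$); and $W_\emptyset$ (degree $3$). Theorem \ref{upper} already gives $\zcl(X)\le 2s+2=6$, while Theorem \ref{lower}(a) applied with $G=G'=\{3,2\}$ and $k_0=3\equiv m\pmod 2$ gives $\zcl(X)\ge 5$. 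The content of the theorem is that the latter inequality is strict (necessarily forcing an exotic product), and then $\TC(X)=7$ follows by sandwiching $\zcl(X)+1\le\TC(X)\le 2n-5=7$ via (\ref{zclz}) and (\ref{TCM}).

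Since $\dim H^1=6$, every grading-$1$ zero divisor is $\overline u=u\otimes 1-1\otimes u$ for some $u\in H^1$, and there are no grading-$0$ zero divisors, so any sixfold product of zero divisors contributing to the top bidegree $(3,3)$ is, up to a nonzero determinant, the single product
$$
P:=\overline{V_1}\,\overline{V_2}\,\overline{V_3}\,\overline{W_{12}}\,\overline{W_{13}}\,\overline{W_{23}}\in H^3(X)\otimes H^3(X)=\Q\cdot(W_\emptyset\otimes W_\emptyset).
$$
Thus $P=\lambda(W_\emptyset\otimes W_\emptyset)$ for a scalar $\lambda\in\Q$, and $\zcl(X)\ge 6$ precisely when $\lambda\ne 0$. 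Expanding $P$ gives a signed sum over the $\binom{6}{3}=20$ splits of the six factors into a left-triple $T$ and right-triple $T^c$; the contribution of each split is $\pm\mu_T\mu_{T^c}$, where $\mu_T\in\Q$ is defined by $\prod_{i\in T}u_i=\mu_T\,W_\emptyset$ in $H^3$. By Theorem \ref{basis} and Proposition \ref{VW} most of the $\mu_T$ are forced (e.g., $\mu_T=\pm1$ when $T=\{V_i,V_j,W_{ij}\}$ and $\mu_T=0$ when $T=\{V_1,V_2,V_3\}$), while the remaining values are linear in the $V$-polynomial corrections of Proposition \ref{VW}, equivalently in the $W_k$-coefficients of the exotic products $W_{ij}W_{i'j'}$.

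My plan is therefore to fix a specific $\ell$ with genetic code $\la 632\ra$ (e.g., $\ell=(1,1,1,3,3,4)$), determine enough of the ring structure of $H^*(X;\Q)$ to exhibit one nonzero exotic coefficient, and then evaluate $\lambda$. The main obstacle is establishing the nonvanishing of that exotic coefficient: Theorems \ref{FHSthm}--\ref{basis} and Proposition \ref{VW} leave it underdetermined, and indeed Lemma \ref{ratlem} and Remark \ref{rem} together force $\lambda=0$ in the absence of exotic products, consistent with Theorem \ref{lower}(b). Additional geometric input is therefore essential---for instance a direct Morse-theoretic computation on $M(\ell)$, or recognition of $X$ as a specific closed orientable $3$-manifold whose triple intersection form on $H^1$ can be read off. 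Once one nonzero exotic coefficient is in hand, associativity and Poincaré duality propagate it through the linked coefficients, the $20$-term evaluation of $\lambda$ is mechanical, and $\zcl(X)=6$ together with $\TC(X)=7$ follows.
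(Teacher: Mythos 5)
Your setup is correct as far as it goes, but it stops exactly at the point where the real content of the theorem begins. You correctly reduce the problem to showing that the scalar $\lambda$ defined by $\overline{V_1}\,\overline{V_2}\,\overline{V_3}\,\overline{W_{12}}\,\overline{W_{13}}\,\overline{W_{23}}=\lambda\,(W_\emptyset\otimes W_\emptyset)$ is nonzero, and you correctly observe that nothing in Theorems \ref{FHSthm}--\ref{basis} or Proposition \ref{VW} pins down the exotic coefficients that $\lambda$ depends on. But you then explicitly defer the decisive step (``Additional geometric input is therefore essential \dots'') rather than supplying it. That is a genuine gap: the theorem's assertion that exotic products exist, and hence that $\zcl(X)=6$, is precisely the claim that this underdetermined data is nonzero, so a proposal that leaves it undetermined has not proved the statement.

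The paper closes this gap by a concrete geometric identification. By \cite[(2) in Expl 2.11]{Geom}, $M(\ell)$ with genetic code $\langle 632\rangle$ is homeomorphic to the connected sum $T^3\# T^3$, and by \cite[Prop 4.2.1]{book} its cohomology ring in positive degrees is $\bigl(H^*(T^3)\oplus H^*(T^3)\bigr)/(a_1a_2a_3-b_1b_2b_3)$ with $a_i^2=b_i^2=a_ib_j=0$. Working in the $a,b$ basis rather than the $V,W$ basis makes the top product transparent:
\begin{equation*}
\overline{a_1}\,\overline{a_2}\,\overline{a_3}\,\overline{b_1}\,\overline{b_2}\,\overline{b_3}
= a_1a_2a_3\otimes b_1b_2b_3 + b_1b_2b_3\otimes a_1a_2a_3
= 2\,(\text{top class}),
\end{equation*}
since $a_1a_2a_3=b_1b_2b_3$. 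This nonvanishing sixfold product of degree-$1$ zero divisors gives $\zcl(X)\ge 6$ immediately, and then the sandwich you describe gives $\TC(X)=7$. The exotic products themselves (e.g.\ $W_{1,2}W_{2,3}=-W_2$) are read off from the dictionary $V_i=a_i+b_i$, $W_{1,2}=a_3$, $W_{1,3}=b_2$, $W_{2,3}=a_1$, etc. In short: your skeleton matches the paper's, but the flesh---the identification of $X$ and the resulting explicit ring computation---is missing, and it is the entire point of the example.
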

\begin{proof} The space $X$ is homeomorphic to the connected sum of two 3-tori by \cite[(2) in Expl 2.11]{Geom}. The length vector $\ell$ could be taken to be $(1,1,1,3,3,4)$, although this is irrelevant to the proof. An elementary argument is presented in \cite[Prop 4.2.1]{book} that there is a ring isomorphism in positive dimensions $H^*(X)\approx (H^*(T^3)\oplus H^*(T^3))/(a_1a_2a_3-b_1b_2b_3)$ with any coefficients. Here $a_i$ and $b_i$ are the generators of the first cohomology groups of the two 3-tori.
We have $a_i^2=0=b_i^2$ and $a_ib_j=0$.

If there were no exotic products in its rational cohomology, then, by  Theorem \ref{lower}(b), since $m=3$ and $k_0=3$, $\zcl(X)$ would equal 5. However, in $H^*(X\times X;\Q)$, we have
$$\abar_1\abar_2\abar_3\bbar_1\bbar_2\bbar_3=a_1a_2a_3\ot b_1b_2b_3+b_1b_2b_3\ot a_1a_2a_3.$$
Since $a_1a_2a_3=b_1b_2b_3$, this  equals 2 times the top class of $H^*(X\times X;\Q)$. Thus the rational zcl equals 6, and $\TC(X)=7$ by (\ref{TCM}) and (\ref{zclz}).

An isomorphism between the $(a,b)$- and $(V,W)$-presentations is given by $V_i=a_i+b_i$,  $W_{1,2}=a_3$, $W_{1,3}=b_2$, $W_{2,3}=a_1$, $W_1=b_2b_3$, $W_2=a_1a_3$, and $W_3=b_1b_2$.
One can check that this satisfies Theorem \ref{basis}, but has exotic products such as $W_{1,2}W_{2,3}=-W_2$ and $V_2W_{1,2}=-W_1+V_2V_3$.
\end{proof}
 \def\line{\rule{.6in}{.6pt}}

\end{document}